\documentclass{article}

\usepackage{amsfonts}
\usepackage{amssymb}
\usepackage[english]{babel}
\setlength{\marginparwidth}{2cm}
\usepackage{algorithm}
\usepackage{algorithmic}
\usepackage{booktabs}
\usepackage{nicefrac}
\usepackage{authblk}
\usepackage[small]{titlesec}
\usepackage{tikz,pgfplots}

\usepackage[
,breaklinks=true  
,colorlinks       
,linkcolor=blue
,urlcolor=red!50!black    
,citecolor=black!40!green
,bookmarks         
,bookmarksnumbered,
pdftex=true,              
plainpages=false,      
hypertexnames=false,     
pdfpagelabels=true,   
hyperindex=true,
pdfa=true
]{hyperref}
\usepackage[leqno]{amsmath}
\usepackage{amsthm}

\newtheorem{theorem}{Theorem}[section]
\newtheorem{definition}[theorem]{Definition}
\newtheorem{lemma}[theorem]{Lemma}

\newtheoremstyle{mystyle}
{}
{}
{\normalfont}
{}
{\normalfont\itshape}
{.}
{ }
{}

\theoremstyle{mystyle}
\newtheorem{example}[theorem]{Example}
\newtheorem{counterexample}[theorem]{Counterexample}

\numberwithin{equation}{section}

\newcommand{\ttfrac}[2]{{\tiny\frac{#1}{#2}}}

\parskip0.8ex
\setlength{\parindent}{0em}

\oddsidemargin 0.3in
\evensidemargin 1.0in
\textwidth 5.5in
\headheight 1.0in
\textheight 9.0in 
\topmargin -1.5in

\newcommand{\R}{\mathbb{R}}
\newcommand{\Z}{\mathbb{Z}}
\newcommand{\N}{\mathbb{N}}

\newcommand{\smax}{\sigma_{\max}}
\renewcommand{\d}{\text{d}}

\newcommand{\conv}{\operatorname{conv}}

\newcommand{\ie}{i.e., }
\newcommand{\eg}{e.g., }
\newcommand{\weak}{\rightharpoonup}
\newcommand{\embed}{\hookrightarrow}

\providecommand{\keywords}[1]{\textbf{Keywords.} #1}

\title{\bf Parabolic optimal control
	problems with combinatorial switching constraints \\ Part I:  Convex relaxations\thanks{This work has partially been supported by Deutsche Forschungsgemeinschaft (DFG) under grant no.~BU~2313/7-1 and ME~3281/10-1.}}
\author{Christoph~Buchheim, Alexandra~Gr\"utering and Christian~Meyer\footnote{\{christoph.buchheim,alexandra.gruetering,christian.meyer\}@math.tu-dortmund.de}}
\date{\vspace{-5ex}}
\affil{Department of Mathematics, TU Dortmund University, Germany}

\begin{document}
\maketitle
\renewcommand{\abstractname}{} 
\begin{abstract}
   We consider optimal control problems for partial differential
   equations where the controls take binary values but vary over the
   time horizon, they can thus be seen as dynamic switches.  The
   switching patterns may be subject to combinatorial constraints such
   as, \eg an upper bound on the total number of switchings or a
   lower bound on the time between two switchings. While such
   combinatorial constraints are often seen as an additional
   complication that is treated in a heuristic postprocessing, the core
   of our approach is to investigate the convex hull of all feasible
   switching patterns in order to define a tight convex relaxation of
   the control problem. The convex relaxation is built by cutting
   planes derived from finite-dimensional projections, which can be
   studied by means of polyhedral combinatorics. A numerical example for the case of a bounded number of switchings shows
   that our approach can significantly improve the dual bounds given by the straightforward continuous relaxation, which is obtained by relaxing binarity constraints.
  
  \keywords{PDE-constrained optimization, switching time optimization, convex relaxations}
\end{abstract}

\section{Introduction} \label{sec: intro}

Mixed-integer optimal control of a system governed by partial or ordinary
differential equations became a hot research topic in the last decade,
as a variety of applications leads to such control problems. In
particular, the control often comes in form of a finite set of
switches which can be operated within a given continuous time horizon,
\eg by shifting of gear-switches in automotive
engineering~\cite{GER05,KSB10, SBF13} or by switching of valves or
compressors in gas and water
networks~\cite{FUEG09,HAN20}. Consequently, various approaches are
discussed in the literature to address optimal control problems with
discrete control variables, often known as mixed-integer optimal
control problems (MIOCPs).  Direct methods, based on the
\textit{first-discretize-then-optimize}\/ paradigm, are widely used to
tackle MIOCPs; see for instance~\cite{GER05} and~\cite{VSG20}. The
control and, if desired, the state are discretized in time and space,
in order to approximate the problem by a large, typically non-convex,
finite-dimensional mixed-integer nonlinear programming problem
(MINLP). The latter can be addressed by standard techniques; see
\cite{Lee12} or~\cite{BKLL13} for surveys on algorithms for
MINLPs. However, the size of the arising MINLPs easily becomes too
large to solve them to proven optimality.  In particular, direct
methods are not promising for optimal control problems governed by
partial differential equations~(PDEs)~\cite{GPRS19, SH20}.

In contrast, arbitrary close approximations of MIOCPs can be computed
efficiently by first replacing the set of discrete control values by
its convex hull and then appropriately rounding the result. The most
common approximation methods for systems governed by ordinary
differential equations~(ODEs) are the \textit{Sum-Up Rounding}
strategy~\cite{SA12,KLM20} and the \textit{Next Force Rounding}
strategy~\cite{JUN14}. PDE-constrained problems can also be addressed
with the Sum-Up Ronding strategy~\cite{HS13}. However, in the presence
of additional combinatorial constraints, the latter may be
violated~\cite[Sect.~5.4]{MAN19}, and the heuristics used to obtain
feasible solutions often do not perform well~\cite[Example~3.2]{KML17}. Therefore, when aiming at globally optimal solutions, such approaches may only serve for computing primal bounds.  
To minimize the
integrality error, the \textit{Combinatorial Integral Approximation
	(CIA)}~\cite{SA05} tracks the average of a relaxed solution over a
given rounding grid by a piecewise constant integer control and the
discretized problem is solved by a tailored branch-and-bound
algorithm~\cite{JRS15, KMS11}. The approach was again generalized to
PDE-constrained problems~\cite{HKMS19}. To reduce the (undesired)
chattering behavior of the rounded control the total variation is
constrained~\cite{ZS20} or switching cost aware rounding algorithms
are considered~\cite{BHKM20, BK20}.

Other approaches optimize the switching times, \eg by controlling the
switching times through a continuous time control function which
scales the length of minor time intervals~\cite{GER06, ROL17} or by
including a fixed number of transition times as decision variables
into the MIOCP and solving the corresponding finite-dimensional
non-convex problems by gradient descent techniques~\cite{SOG16,EWA06}
or by second order methods~\cite{JM11,SOG17}.  PDE-constrained optimal
control problems can be addressed by the concept of switching time
optimization as well~\cite{HR16}. Nevertheless, these methods have a
limited applicability, since fairly restrictive assumptions on the
objective and the state dynamics need to be made in order to guarantee
differentiability in the discretized setting~\cite{FMO13}.

In the context of optimal control problems governed by PDEs, switching
constraints are frequently imposed by penalty terms added to the
objective functional~\cite{CIK16, CRKB16, CRK17}.  The arising
penalized problems are non-convex in general and are therefore
convexified by means of the bi-conjugate functional associated with
the penalty term. The desired switching structure of the optimal
solutions of the convexified problems can however only be guaranteed
under additional structural assumptions on the unknown solution.  For
the case of a switching between multiple constant control variables, a
multi-bang approach might be favorable since optimal control problems
subject to box constraints on the control may show a bang-bang
behavior in the absence of a Tikhonov-type regularization term
\cite{Troe79,DH12, CWW18, TW18}.  However, the bang-bang structure of
the optimal control cannot be guaranteed in general.  In order to
promote that the control attains the desired constant values,
$L^0$-penalty terms or suitable indicator functionals are added to the
objective and convex relaxations of the penalty terms based on the
bi-conjugate functional are employed to make the problem amenable for
optimization algorithms~\cite{CK14,CTW18}.  Again, as in case of the
penalization of the switching constraints mentioned above, the
multi-bang structure of the optimal solutions of the convexified
problems can only be ensured under additional assumptions that cannot
be verified a priori. In~\cite{CK16}, the convexification of the
$L^0$-penalty by means of the bi-conjugate functional is employed in
the context of topology optimization, in~\cite{CKK18}, the
$L^0$-penalty is enriched by the BV-seminorm.  $L^0$-penalization
techniques that go without regularization or convexification are for
instance addressed in~\cite{CW20} from a theoretical perspective and
in~\cite{Wac19} with regard to algorithms. However, to the best of
our knowledge, additional combinatorial constraints on the switching
structure have not yet been included in the penalization framework.

In summary, the design of global solvers for MIOCPs with dynamic switches and combinatorial switching
constraints is an open field of research.
The core of our new approach for addressing such problems
is the computation of lower bounds by a tailored convexification of
the set of feasible switching patterns in function space. A counterexample given
in~Section~\ref{sec: convhull} shows that, even when the combinatorial constraint only consists in an upper bound on the total number of switchings, the naive approach of
just relaxing the binarity constraint does not lead to the convex hull
of the set of feasible switching patterns.  Our aim is to determine tighter approximations of this
convex hull by considering finite-dimensional projections that allow
for the efficient computation of cutting planes. Based on the
resulting outer description of the convex hull, in the companion paper~\cite{partII} we develop a tailored outer approximation algorithm  which
converges to a global minimizer of the convex relaxations. The
resulting lower bounds could be used, \eg in a branch-and-bound scheme
to obtain globally optimal solutions of the control problems.

The remainder of this paper is organized as follows. In Section~\ref{sec:
	opt}, we specify the prototypical optimal control problem as well as the class of combinatorial switching constraints considered in this work and show
that the problem admits an optimal solution. In Section~\ref{sec: convhull}, we investigate the convex hull
of feasible switching patterns and show that it can be fully described by cutting planes
lifted from finite-dimensional projections. 
An example in Section~\ref{sec: qbounds} shows the strength of the lower bounds resulting from our tailored convexification. 

\section{Optimal control problem} \label{sec: opt}
For the sake of simplicity, throughout this
paper, we restrict ourselves to a parabolic binary optimal control
problem with switching constraints of the following form:
\begin{equation}\tag{P}\label{eq:optprob}
\left\{\quad
\begin{aligned}
\text{min} \quad & J(y,u) = \tfrac{1}{2}\, \|y - y_{\textup{d}}\|_{L^2(Q)}^2 + \tfrac{\alpha}{2}\,\|u-\tfrac 12\|^2_{L^2(0,T; \R^n)}\\
\text{s.t.} \quad & 
\begin{aligned}[t]
\partial_t y(t,x) - \Delta y(t,x) &= \sum_{j=1}^n u_j(t) \,\psi_j(x) & & \text{in } Q := \Omega \times (0,T),\\
y(t,x) &= 0 & & \text{on } \Gamma := \partial\Omega \times (0,T),\\  
y(0,x) &= y_0(x) & & \text{in } \Omega,
\end{aligned}\\
\text{and} \quad & u \in D.
\end{aligned}
\quad \right.
\end{equation}
Herein, $T > 0$ is a given final time and $\Omega\subset \R^d$, $d\in \N$, denotes a bounded domain, where a
domain is an open and connected subset of a finite-dimensional vector
space, with Lipschitz boundary $\partial \Omega$ in the sense of
\cite[Def. 1.2.2.1]{GRIS85}. The
form functions $\psi_j\in H^{-1}(\Omega)$, $j=1,\dots,n$, as well as
the initial state~$y_0 \in L^2(\Omega)$ are given. Moreover,
\[
D \subset \big\{ u \in BV(0,T;\R^n)\colon u(t) \in \{0,1\}^n \text{ f.a.a.\ } t \in (0,T)\big\}
\]
denotes the set of feasible \emph{switching controls}. Finally,
$y_{\textup{d}} \in L^2(Q)$ is a given desired state
and $\alpha \geq 0$ is a Tikhonov parameter weighting the mean
deviation from~$\tfrac 12$. Note that the choice of~$\alpha$ does not
have any impact on the set of optimal solutions of~\eqref{eq:optprob},
as $u \in \{0,1\}^n \text{ a.e.~in } (0,T)$ and hence the Tikhonov
term is constant. However, the convex relaxations of~\eqref{eq:optprob}
considered in this paper as well as their optimal values are
influenced by~$\alpha$.

The particular challenge of our problem are the combinatorial
switching constraints modeled by the
set~$D$
of
feasible controls.  It is supposed to satisfy the two
following assumptions:
\begin{align}
& \text{$D$ is a bounded set in $BV(0,T;\R^n)$,} \tag{D1}\label{eq:D1} \\
& \text{$D$ is closed in $L^p(0,T;\R^n)$ for some fixed $p \in [1,\infty)$.} \tag{D2}\label{eq:D2}
\end{align}
Here, $BV(0,T;\R^n)$ denotes the set of all vector-valued functions with bounded variation, \ie
\[BV(0,T;\R^n):=\{u\in L^1(0,T;\R^n): u_i\in BV(0,T) \text{ for } i=1,\ldots,n\,
\}\] equipped with the norm
\[\|u\|_{BV(0,T;\R^n)} := \|u\|_{L^1(0,T;\R^n)} + \sum_{j=1}^n |u_j|_{BV(0,T)}\;.\]
For more details on the space of bounded variation functions, see, \eg
\cite[Chap.~10]{ATT14}. Note that, in our case, the
BV-seminorm~$|u_j|_{BV(0,T)}$ agrees with the minimal number of
switchings of any representative of~$u_j$ with values
in~$\{0,1\}$. 

A possible example for such a set is
\begin{equation}\label{eq:Dex}
\begin{aligned}
D_{\max} := \big\{ u \in BV(0,T;\R^n)\colon \;
& u(t) \in \{0,1\}^n \text{ f.a.a.\ } t \in (0,T),\\
& |u_j|_{BV(0,T)} \leq \smax\; \forall \,j = 1, \dots, n \big\},
\end{aligned}
\end{equation}
where $|\cdot|_{BV(0,T)}$ denotes the BV-seminorm and $\smax\in \N$
is a given number. The set $D_{\max}$ meets the assumptions~\eqref{eq:D1}
and~\eqref{eq:D2}, as we will show in Example \ref{ex: boundedshift}. This choice of~$D$ is motivated by the following
application-driven scenario: suppose $y$ is the temperature of a body
covering the domain $\Omega$ and the aim of the optimization is to
minimize the deviation of $y$ from a given desired state
$y_{\textup{d}}$, by means of $n$ given heat sources modeled by the
form functions~$\psi_j$, $j=1, \dots, n$. These heat sources can be
switched on and off at arbitrary points in time, but we are only
allowed to shift each switch for at most $\smax$ times.  This leads to
the set $D_{\max}$.

Various other practically relevant choices of~$D$ are conceivable. For
instance, it could be required to bound the time interval between two
shiftings of the same switch from below because of technical
limitations; this kind of restriction is known as minimum dwell time
constraints in the optimal control community and as min-up/min-down
constraints in the unit commitment community. See Example \ref{ex: combswpoint} for a discussion and generalization of this class of constraints. Another condition may be
that certain switches are not allowed to be used (or switched on) at
the same time.

Our previous assumptions guarantee that the PDE contained in~\eqref{eq:optprob} admits a unique weak solution $y\in W(0,T) :=
H^1(0,T;H^{-1}(\Omega)) \cap L^2(0,T; H^1_0(\Omega))$ for every $u\in D\subset~L^2(0,T;\R^n)$; see
\cite[Chapter~3]{Troe10}. The associated solution operator~$S\colon
L^2(0,T;\R^n) \to W(0,T)$ is affine and continuous. Using this
solution operator, the problem \eqref{eq:optprob} can be written as
\begin{equation} \tag{P'} \label{eq:P'}
\left\{\quad
\begin{aligned}
\mbox{min }~ &f(u)=J(Su,u) \\ 
\mbox{s.t. }~
& u\in D\;.
\end{aligned}\, \right.
\end{equation}
Note that the objective function $f\colon L^2(0,T;\R^n) \to \R$ is
weakly lower semi-continuous because both~$u \mapsto \|Su -
y_{\textup{d}}\|_{L^2(Q)}^2$ and~$u \mapsto \|u-\tfrac 12 \|_{L^2(0,T;\R^n)}^2$
are convex and lower semi-continuous, thus weakly lower
semi-continuous, and the solution operator $S$ is affine and
continuous, thus weakly continuous.

\begin{theorem}\label{thm:discr_ctrl}
	Let $D\neq \emptyset$. Then Problem~\eqref{eq:P'} admits a global minimizer.
\end{theorem}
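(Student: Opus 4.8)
The plan is to use the direct method of the calculus of variations. Since $D \neq \emptyset$ and $f$ is bounded below (it is a sum of squared norms, hence nonnegative), the infimum $m := \inf_{u \in D} f(u)$ is finite, and we may pick a minimizing sequence $(u_k)_{k \in \N} \subset D$ with $f(u_k) \to m$. First I would extract a weakly convergent subsequence: by assumption \eqref{eq:D1}, $D$ is bounded in $BV(0,T;\R^n)$, hence in particular bounded in $L^1(0,T;\R^n)$; moreover, since $u_k(t) \in \{0,1\}^n$ almost everywhere, the sequence is uniformly bounded in $L^\infty(0,T;\R^n)$ and therefore in $L^p(0,T;\R^n)$ for the exponent $p$ from \eqref{eq:D2}. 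Choosing $1 < q < \infty$ with $q \le p$ (e.g. using boundedness in $L^2$), reflexivity of $L^q(0,T;\R^n)$ yields a subsequence (not relabeled) and some $\ustar \in L^q(0,T;\R^n)$ with $u_k \weak \ustar$ in $L^q(0,T;\R^n)$.

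The key point is to check feasibility of the weak limit, i.e.\ $\ustar \in D$. This is where assumption \eqref{eq:D2} enters, but some care is needed because \eqref{eq:D2} gives closedness in the \emph{strong} $L^p$-topology, whereas we only have weak convergence. To bridge this gap I would invoke the uniform BV-bound from \eqref{eq:D1}: the embedding $BV(0,T;\R^n) \embed L^1(0,T;\R^n)$ is compact (this is the one-dimensional-domain Rellich-type compactness for $BV$; see, e.g.\ \cite[Chap.~10]{ATT14}), so the bounded sequence $(u_k)$ has a subsequence converging strongly in $L^1(0,T;\R^n)$, and, combined again with the uniform $L^\infty$-bound and interpolation, strongly in $L^p(0,T;\R^n)$. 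The strong limit must coincide with the weak limit $\ustar$, and since $D$ is closed in $L^p(0,T;\R^n)$ by \eqref{eq:D2}, we conclude $\ustar \in D$.

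It then remains to pass to the limit in the objective. As already observed in the text preceding the theorem, $f$ is weakly lower semicontinuous on $L^2(0,T;\R^n)$ because $u \mapsto \|Su - y_{\textup{d}}\|_{L^2(Q)}^2$ and $u \mapsto \|u - \tfrac12\|_{L^2(0,T;\R^n)}^2$ are convex and continuous (hence weakly l.s.c.) and $S$ is affine and continuous (hence weakly-weakly continuous). Working along the subsequence that converges strongly in $L^p$ and hence weakly in $L^2$, weak lower semicontinuity gives $f(\ustar) \le \liminf_{k\to\infty} f(u_k) = m$. Since $\ustar \in D$, also $f(\ustar) \ge m$, so $f(\ustar) = m$ and $\ustar$ is a global minimizer. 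The main obstacle, and the only nonroutine step, is the feasibility argument: one must be careful to combine \eqref{eq:D1} and \eqref{eq:D2} correctly, using the compact $BV \embed L^1$ embedding together with the uniform $L^\infty$-bound to upgrade weak convergence to strong $L^p$-convergence before invoking closedness of $D$.
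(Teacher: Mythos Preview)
Your proof is correct and follows essentially the same route as the paper: both use the compact embedding $BV(0,T;\R^n)\embed L^1(0,T;\R^n)$ from \eqref{eq:D1} to extract a strongly $L^1$-convergent subsequence, upgrade this to strong $L^p$-convergence via the uniform $L^\infty$-bound (you invoke interpolation, the paper uses a.e.\ convergence plus dominated convergence), apply \eqref{eq:D2} to obtain feasibility of the limit, and conclude by weak lower semicontinuity of $f$. Your initial extraction of a weak $L^q$-limit is harmless but unnecessary, since the compact $BV$-embedding already delivers strong convergence directly.
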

\begin{proof}
	Since~$D\neq\emptyset$, we have $f^\star:=\inf_{u\in D}f(u)\in
	\R\cup\{-\infty\}$.  Let $\{u^k\}_{k\in \mathbb{N}}$ in $D$ be an
	infimal sequence with
	\[
	\lim\limits_{k\to \infty} f(u^k) = f^\star\;.
	\]
	We know that $\{u^k\}_{k\in \N}$ is a bounded sequence
	in~$BV(0,T;\R^n)$, since $D$ is a bounded set in~$BV(0,T;\R^n)$ by
	assumption \eqref{eq:D1}, \ie
	\[\sup_{k\in \N}~\Vert u^k \Vert_{BV(0,T;\R^n)} = \sup_{k\in \N} \left(\|u^k\|_{L^1(0,T;\R^n)} + \textstyle\sum_{j=1}^n |u_j^k|_{BV(0,T)}\right) < \infty\;.\]
	By Theorem 10.1.3 and Theorem 10.1.4 in~\cite{ATT14}, $BV(0,T;\R^n)$
	is compactly embedded in $L^p(0,T;\R^n)$, and hence there exists a
	strongly convergent subsequence, which we again denote by
	$\{u^{k}\}_{k\in \N}$, such that $ u^k \to u^\star\in
	L^{p}(0,T;\R^n) \text{ for } k\to \infty.$ 
	Since $D$ is closed in $L^p(0,T;\R^n)$ by
	condition \eqref{eq:D2}, we deduce that $u^\star\in D$. The weak lower
	semi-continuity of the objective function~$f$ leads to
	\[f(u^\star)\leq \liminf\limits_{k\to\infty } f(u^k)=f^\star\;.\]
	This implies $f^\star  > -\infty$ as well as the optimality of $u^\star$ for \eqref{eq:P'}.
\end{proof}

\section{Convex hull description} \label{sec: convhull}

The crucial ingredient of our approach is the outer description of the
convex hull of the set~$D$ of feasible switching patterns by linear
inequalities. In general, just replacing $\{0,1\}$ with $[0,1]$ in the
definition of~$D$ does not lead to the convex hull of~$D$ in any
$L^p$-space. This is true even in the case of just one switch that can
be changed at most once on the entire time horizon, \ie if the
feasible switching control is required to belong to
\begin{equation}\label{eq:counter}
D:=\{ u \in BV(0,T)\colon \;
u(t) \in \{0,1\} \text{ f.a.a.\ } t \in (0,T),
|u|_{BV(0,T)} \leq 1\}\;.
\end{equation}
Essentially, the naive approach does not consider the monotonicity of the
switches in~$D$, as we will see in the following counterexample.

\begin{counterexample}
	Let~$D$ be defined as in~\eqref{eq:counter} and consider the function
	\[
	u(t):=\begin{cases}
	\tfrac{1}{2} & \text{if }t \in [\tfrac{1}{3}T,\tfrac{2}{3}T ] \\
	0 & \text{otherwise}.
	\end{cases}\]
	Obviously, we have $u\in BV(0,T)$ with $u(t) \in [0,1]$ for
	$t\in(0,T)$ and $|u|_{BV(0,T)}=1$. However, we claim that~$u$
	does not belong to the closed convex hull of~$D$ in~$L^p(0,T)$ for
	any $p\in [1,\infty)$.
	
	Assume on contrary that $u\in\overline{\conv(D)}^{L^p(0,T)}$
	for some $p\in [1, \infty)$. Then there exists a sequence
	$\{u^k\}_{k\in \N}\subset \conv(D)$ with $u^k \to u$ in $L^p(0,T)$
	for $k\to \infty$. In particular, $\{u^k\}_{k\in \N}$ converges
	strongly to $u$ in $L^1(0,T)$ due to $L^p(0,T)\embed
	L^1(0,T)$, \ie
	\[
	\int_{0}^{T} |u^k-u|\, \d t \to 0\mbox{ for } k\to \infty\;.
	\]
	Define $A^k:= \{t\in[\tfrac{1}{3}T,\tfrac{2}{3}T ]: u^k(t)\geq
	\tfrac{2}{5}\}$. We claim that there exists $k_0\in \N$ such that
	the sets~$A^k$, $k\geq k_0$, have a positive
	Lebesgue-measure. Indeed, if such a $k_0\in \N$ did not exist, then
	we could find a subsequence, which we denote by the same symbol
	$\{A^k\}_{k\in\N}$ for simplicity, such that $\lambda(A^k)=0$ for
	all $k\in \N$, where $\lambda(A^k)$ denotes the Lebesgue measure of
	$A^k$.  With $\lambda(A^k)=0$, it follows
	\[ \int_{0}^{T} |u^k-u|\, \d t \geq
	\int_{\ttfrac{1}{3}T}^{\ttfrac{2}{3}T} |u^k-\tfrac{1}{2}|\, \d t =
	\int_{[\ttfrac{1}{3}T,\ttfrac{2}{3}T]\setminus A^k}
	|u^k-\tfrac{1}{2}|\, \d t >\tfrac{1}{30}T\;,
	\]
	where the last inequality holds due to $ |u^k-\tfrac{1}{2}|>
	\tfrac{1}{10}$ for all $t \in [\tfrac{1}{3}T,\tfrac{2}{3}T
	]\setminus A^k$ by definition of~$A^k$. This contradicts the strong
	convergence of $u^k$ to $u$ in $L^1(0,T)$. Thus, a number $k_0\in
	\N$ exists with $\lambda(A^k)>0$ for all $k\geq k_0$.
	
	Now, let $k\geq k_0$ be arbitrary. We write $u^k\in \conv(D)$ as a convex combination
	\[u^k=\sum_{l=1}^{m_k} \mu_l^k y_l^k\]
	of functions in $D$. Let
	$t_0\in A^k$ be a Lebesgue point of all functions $y_l^k\in D$,
	$1\leq l\leq m_k$, which exists since the set of all non-Lebesgue
	points of $y_l^k$ is a set of Lebesgue measure zero. Then, we know
	\[
	\tfrac{2}{5}\leq u^k(t_0)=\sum_{l=1}^{m_k} \mu_l^k y_l^k(t_0)\;.
	\]
	Set $I_k:=\{l\in\{1,\ldots,m_k\}: y_l^k(t_0)=1\}$. The
	inequality then implies
	\begin{equation}\label{eq:ineq}
	\tfrac{2}{5}\leq \sum_{l\in I_k} \mu_l^k\;.
	\end{equation}
	Since $y_l^k(t_0)=1$ for $l\in I_k$ and $y_l^k$ might shift at most once due to $|y_l^k|_{BV(0,T)}\leq 1$, we deduce that either $y_l^k$ was first turned off and then turned on in $(0,t_0)$, such that $y_l^k(t)\equiv 1$ a.e. in $(t_0,T)$ holds, or $y_l^k$ was first turned on, \ie $y_l^k(t)\equiv 1$ a.e. in $(0,t_0)$. Consequently, we get $y_l^k(t)\equiv 1$ a.e. in $(0,\tfrac{1}{3}T )$ or $(\tfrac{2}{3}T,T )$ for every $l\in I_k$. The latter, together with
	\eqref{eq:ineq}, yields
	\[\int_{0}^{T} |u^k-u|\ \d t \geq \int_{(0,\ttfrac{1}{3}T)\cup (\ttfrac{2}{3}T,T)} |u^k|\, \d t  
	\geq\sum_{l\in I_k} \mu_l^k \int_{(0,\ttfrac{1}{3}T)\cup (\ttfrac{2}{3}T,T)}y_l^k\, \d t  
	\geq \tfrac{2}{15} T\;,
	\]
	which contradicts the strong convergence of $u^k$ to $u$ in
	$L^1(0,T)$.
\end{counterexample}

This counterexample shows that we cannot expect to obtain a tight
description of~$\conv(D)$ without a closer investigation of the
specific switching constraint under consideration. Our basic idea is
to reduce this investigation to a purely combinatorial task by
projecting the set~$D$ to finite-dimensional spaces~$\R^M$, by means
of~$M\in \N$ linear and continuous functionals $\Phi_i \in
L^p(0,T;\R^n)^*$, $i=1, \dots, M$. In the following, we restrict
ourselves to local averaging operators of the
form \begin{equation}\label{eq:localaveraging} \langle \Phi_{(j-1)N+i}, u
\rangle := \tfrac{1}{\lambda(I_{i})}\int_{I_{i}} u_{j}\,\d t
\end{equation}
for $j=1,\ldots,n$ with suitably chosen subintervals $I_i\subset (0,T)$, $i=1,\ldots,N$, and $M:=n\,N$. 
The resulting projection then reads
\begin{equation}\label{eq:pi}
\Pi\colon BV(0,T;\R^n) \ni u \mapsto \big(\langle \Phi_l, u
\rangle\big)^M_{l=1} \in \R^{M}\;.
\end{equation}
Note that~$\Pi$ is a linear mapping. The core result underlying our approach
is that, for increasing~$N$, projections~$\Pi_N$ can be designed such that
\begin{equation}\label{eq:convapprox}
\overline{\conv(D)}^{L^p(0,T;\R^n)} =
\bigcap_{N\in \N} \{ v \in L^p(0,T;\R^n)\colon \Pi_N(v) \in
C_{D,\Pi_N}\}
\end{equation}
where
\[
C_{D,\Pi} := \conv\{\Pi (u)\colon u \in D \}\subset \R^{M}\;.
\]
In other words, an outer description of all finite-dimensional convex hulls
$C_{D,\Pi}$ also leads to an outer description of the convex hull of
$D$ in function space.

We first observe that our general assumptions \eqref{eq:D1} and
\eqref{eq:D2} guarantee the closedness of the finite-dimensional set
$C_{D,\Pi}$ in $\R^M$.
\begin{lemma}\label{lem: finiteclosed}
	For any\/ $\Pi$ as in~\eqref{eq:pi}, the set $C_{D,\Pi}$ is closed in $\R^M$.
\end{lemma}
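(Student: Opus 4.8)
The plan is to prove the slightly stronger assertion that $C_{D,\Pi}$ is in fact \emph{compact}, which splits into three steps: (i) the restriction of $\Pi$ to $D$ is continuous for the $L^p$-topology; (ii) $D$ itself is compact in $L^p(0,T;\R^n)$, so that $\Pi(D)$ is a compact subset of $\R^M$; and (iii) the convex hull of a compact set in $\R^M$ is again compact, hence closed.

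For step~(i) I would note that each local averaging functional in~\eqref{eq:localaveraging} satisfies $|\langle \Phi_i, u\rangle| \le \lambda(I_i)^{-1}\,\|u\|_{L^1(0,T;\R^n)}$, so $\Pi$ extends to a bounded linear operator $L^1(0,T;\R^n)\to\R^M$; since $(0,T)$ has finite measure, $L^p(0,T;\R^n)\embed L^1(0,T;\R^n)$ continuously, and hence $\Pi$ is continuous also as a map $L^p(0,T;\R^n)\to\R^M$. In particular, $\Pi(D)$ is bounded in $\R^M$ by assumption~\eqref{eq:D1}.

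For step~(ii) I would reuse the compactness argument already carried out in the proof of Theorem~\ref{thm:discr_ctrl}: given any sequence $\{u^k\}_{k\in\N}\subset D$, assumption~\eqref{eq:D1} together with the compact embedding $BV(0,T;\R^n)\embed L^1(0,T;\R^n)$ (Theorems~10.1.3 and~10.1.4 in~\cite{ATT14}) provides a subsequence converging in $L^1(0,T;\R^n)$ to some $\ustar$; a further subsequence converges a.e.; since $u^k(t)\in\{0,1\}^n$ a.e.\ in $(0,T)$, the functions $|u^k|^p$ are bounded a.e.\ by a fixed constant, so dominated convergence upgrades this to strong convergence in $L^p(0,T;\R^n)$; and~\eqref{eq:D2} gives $\ustar\in D$. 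Thus $D$ is sequentially compact in $L^p(0,T;\R^n)$, and by step~(i) the continuous image $\Pi(D)$ is compact in $\R^M$.

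For step~(iii), Carath\'eodory's theorem writes every point of $\conv\{\Pi(u)\colon u\in D\} = C_{D,\Pi}$ as a convex combination of at most $M+1$ points of $\Pi(D)$, so $C_{D,\Pi}$ is the image of the compact set $\Delta_M\times\Pi(D)^{M+1}$, with $\Delta_M := \{\mu\in\R^{M+1}\colon \mu_i\ge 0,\ \sum_i\mu_i=1\}$, under the continuous map $(\mu,z^0,\dots,z^M)\mapsto\sum_{i=0}^M\mu_i z^i$; hence $C_{D,\Pi}$ is compact and in particular closed. The only step requiring genuine care is~(ii): one must verify that the $BV$-bound yields $L^p$-compactness of $D$ and not merely $L^1$-compactness, which is exactly where the pointwise bound coming from the binarity of the controls and the closedness assumption~\eqref{eq:D2} enter; everything else is routine.
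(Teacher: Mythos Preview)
Your proof is correct and follows essentially the same route as the paper: both arguments establish compactness of $\Pi(D)$ by reusing the $BV$-compactness-plus-binarity reasoning from Theorem~\ref{thm:discr_ctrl} (i.e., $BV\embed L^1$ compactly, a.e.\ convergence of a subsequence, dominated convergence to upgrade to $L^p$, and closedness of $D$ via~\eqref{eq:D2}), together with continuity of~$\Pi$.

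One minor difference worth noting: the paper's proof takes a convergent sequence in $C_{D,\Pi}$ but writes each term as $\Pi(u^k)$ for some $u^k\in D$, so strictly speaking it only verifies that $\Pi(D)$ is closed and leaves the passage to the convex hull implicit. Your step~(iii), invoking Carath\'eodory to write $C_{D,\Pi}$ as the continuous image of the compact set $\Delta_M\times\Pi(D)^{M+1}$, makes this passage explicit and is in that sense slightly more complete.
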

\begin{proof}
	Let $\{\Pi(u^k)\}_{k\in \N}\subset \R^M$ be a convergent sequence
	in $\Pi(D)$, resulting from the projection of feasible switching
	controls $u^k\in D$ for $k\in \N$, with $\Pi(u^k) \to \omega $ in
	$\R^M$. The sequence $\{u^k\}_{k\in \N}\subset D$ is bounded in
	$BV(0,T;\R^n)$ by~\eqref{eq:D1}. As in~Theorem~\ref{thm:discr_ctrl}, the
	compactness of the embedding $BV(0,T;\R^n)\embed L^p(0,T;\R^n)$ by
	Theorem~10.1.3 and Theorem~10.1.4 in~\cite{ATT14} implies the existence of a
	strongly convergent subsequence, again denoted by
	$\{u^{k}\}_{k\in \N}$, such that $ u^k \to u\in L^{p}(0,T;\R^n)
	\text{ for } k\to \infty.$ Since $D$ is closed in $L^p(0,T;\R^n)$
	by~\eqref{eq:D2}, we deduce $u\in D$.  By continuity of $\Pi$ in
	$L^p(0,T;\R^n)$, we then have
	\[
	\omega = \lim\limits_{k\to \infty}\Pi (u^k) =\Pi(u)
	\] so that $\omega$ lies in $\Pi(D).$ Hence, the set $\Pi(D)$ is closed in $\R^M$. It is also bounded, thus compact, such that $C_{D,\Pi}$ is closed as the convex hull of a compact set in $\R^M$.
\end{proof}

As a consequence, we obtain that the subset of $L^p(0,T;\R^n)$
corresponding to the finite-dimensional projection $\Pi$ is convex and
closed in $L^p(0,T;\R^n)$.
\begin{lemma}\label{lem: closed}
	For any\/ $\Pi$ as in~\eqref{eq:pi}, the set $\{ v \in
	L^p(0,T;\R^n)\colon\Pi(v) \in C_{D,\Pi}\}$ is convex and closed
	in~$L^p(0,T;\R^n)$.
\end{lemma}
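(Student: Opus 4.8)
The plan is to identify the set in question as the preimage $\Pi^{-1}(C_{D,\Pi})$ and then read off both claimed properties from facts already at hand: $C_{D,\Pi}$ is convex, being a convex hull by definition, and it is closed in $\R^M$ by Lemma~\ref{lem: finiteclosed}; moreover $\Pi$ is a linear and continuous map from $L^p(0,T;\R^n)$ into $\R^M$.

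First I would make explicit that $\Pi$, although written in~\eqref{eq:pi} as a map on $BV(0,T;\R^n)$, is the restriction of the linear map $v\mapsto(\langle\Phi_i,v\rangle)_{i=1}^M$ that is well defined on all of $L^p(0,T;\R^n)$ and bounded there, because each $\Phi_i$ belongs to $L^p(0,T;\R^n)^*$; for the local averaging functionals in~\eqref{eq:localaveraging} this amounts to H\"older's inequality, yielding $|\langle\Phi_i,v\rangle|\le\lambda(I_i)^{-1/p}\,\|v\|_{L^p(0,T;\R^n)}$. This is precisely the $L^p$-continuity of $\Pi$ already used in the proof of Lemma~\ref{lem: finiteclosed}.

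For convexity, I would take $v,w$ in the set and $\theta\in[0,1]$ and use linearity of $\Pi$ to write $\Pi(\theta v+(1-\theta)w)=\theta\,\Pi(v)+(1-\theta)\,\Pi(w)\in C_{D,\Pi}$, the membership holding since $C_{D,\Pi}$ is convex; hence $\theta v+(1-\theta)w$ again lies in the set. For closedness, I would simply invoke that the preimage of the set $C_{D,\Pi}\subset\R^M$, which is closed by Lemma~\ref{lem: finiteclosed}, under the continuous map $\Pi\colon L^p(0,T;\R^n)\to\R^M$ is closed in $L^p(0,T;\R^n)$.

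No genuine obstacle arises here: the substantive work has been front-loaded into Lemma~\ref{lem: finiteclosed}, whose proof extracted the closedness of $C_{D,\Pi}$ from the $BV$-compactness together with the binarity of feasible controls. The only point worth a sentence of care is the routine observation that $\Pi$ is continuous with respect to the $L^p$-topology rather than merely the $BV$-topology, which is what allows the preimage argument to be carried out in $L^p(0,T;\R^n)$.
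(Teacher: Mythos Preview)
Your proposal is correct and follows essentially the same approach as the paper: convexity from the convexity of $C_{D,\Pi}$ together with the linearity of~$\Pi$, and closedness from Lemma~\ref{lem: finiteclosed} together with the $L^p$-continuity of~$\Pi$. The paper's proof is just the terse version of what you wrote; your additional sentence making explicit why $\Pi$ is continuous on $L^p(0,T;\R^n)$ (via H\"older) is a welcome clarification but not a different idea.
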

\begin{proof}
	The convexity assertion follows from the convexity of $C_{D,\Pi}$
	together with the linearity of~$\Pi$. Closedness follows from
	Lemma~\ref{lem: finiteclosed} and the continuity of~$\Pi$
	in~$L^p(0,T;\R^n)$.
\end{proof} 

By the following observation, each projection~$\Pi$ gives rise to a
relaxation of the closed convex hull of~$D$ in~$L^p(0,T;\R^n)$. These relaxations can be used to derive outer approximations by
linear inequalities.
\begin{lemma}\label{lem: convD}
	For any\/ $\Pi$ as in~\eqref{eq:pi}, we have
	\[\overline{\conv (D)}^{ L^p(0,T;\R^n)}\subseteq \{ v \in
	L^p(0,T;\R^n)\colon \Pi(v) \in C_{D,\Pi}\}=:V\;.\]
\end{lemma}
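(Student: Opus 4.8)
The plan is to first establish the elementary inclusion $D \subseteq V$ and then to upgrade it to the closed convex hull by invoking the convexity and closedness of $V$ already provided by Lemma~\ref{lem: closed}.

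For the first step I would fix an arbitrary $u \in D$. By the definition of $C_{D,\Pi}$ as the convex hull of $\{\Pi(u')\colon u' \in D\}$, the point $\Pi(u)$ is in particular an element of this generating set and hence of its convex hull $C_{D,\Pi}$. Thus $u \in V$, and since $u \in D$ was arbitrary, $D \subseteq V$. For the second step, recall from Lemma~\ref{lem: closed} that $V$ is a convex subset of $L^p(0,T;\R^n)$ which is closed with respect to the $L^p$-norm. Since $D \subseteq V$ and $V$ is convex, we obtain $\conv(D) \subseteq V$; passing to $L^p$-closures and using that $V$ is already closed yields
\[
\overline{\conv(D)}^{L^p(0,T;\R^n)} \subseteq \overline{V}^{L^p(0,T;\R^n)} = V\;,
\]
which is exactly the asserted inclusion.

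There is no genuine obstacle here: the statement is a direct bookkeeping consequence of Lemma~\ref{lem: closed} together with the trivial observation that the image of a single feasible control under $\Pi$ already lies in the finite-dimensional convex hull $C_{D,\Pi}$. The only points requiring a modicum of care are to keep the convex hull taken in function space notationally separate from the finite-dimensional convex hull defining $C_{D,\Pi}$, and to ensure that the closure in question is the $L^p$-closure — which is precisely the topology for which Lemma~\ref{lem: closed} guarantees closedness of $V$, so that no additional approximation argument is needed.
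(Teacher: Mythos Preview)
Your proof is correct and follows essentially the same route as the paper: first note $D\subseteq V$ by definition of $C_{D,\Pi}$, then invoke Lemma~\ref{lem: closed} to pass to the convex hull and the $L^p$-closure. The only cosmetic difference is that the paper phrases the step $\conv(D)\subseteq V$ via the linearity of $\Pi$ rather than the convexity of $V$, but these are the same observation.
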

\begin{proof}
	By construction of $C_{D,\Pi}$, every $u\in D$ satisfies $\Pi(u)
	\in C_{D,\Pi}$. The linearity of~$\Pi$ leads to
	$\conv (D)\subset V$, 
	using the convexity of~$V$ stated in Lemma~\ref{lem:
		closed}.  Again by Lemma~\ref{lem: closed}, the set~$V$ is closed
	in $L^p(0,T;\R^n)$, which shows the desired result.
\end{proof}

The following result shows that the convex hull of the set of feasible
switching controls can be fully described with the help of appropriate
finite-dimensional sets $C_{D,\Pi}$. 
With a little abuse of notation, we slightly change the notation of the local 
averaging operators in the sense that the number of subintervals now differs 
from the dimension $M$ of the range of $\Pi$, see \eqref{eq:Pik} below, 
in order to ease the proof of the following theorem.

\begin{theorem}\label{thm: convD}
	For each~$k\in\N$, let~$I^k_1,\dots,I_{N_k}^k$, $N_k\in\N$, be
	disjoint open intervals in~$(0,T)$ such that 
	\begin{itemize}
		\item[(i)] $\bigcup_{i=1}^{N_k} \overline{I_i^k} = [0,T]$ for all $k\in\N$ and
		\item[(ii)]$\max_{i=1,\dots,N_k}\lambda(I_i^k)\to 0\; \mbox{ for } k\to \infty$.
	\end{itemize}
	Set $M_k := n\, N_k$ and define projections~$\Pi_{k}\colon BV(0,T;\R^n) \to \R^{M_k}$,
	for~$k\in\N$, by 
	\begin{equation}\label{eq:Pik}
	\langle\Phi_{(j-1)N_k+i}^k,u\rangle :=
	\tfrac{1}{\lambda (I_i^k)}\int_{I_i^k} u_j(t)\, \d t\;
	\end{equation}
	for $j=1,\ldots,n$ and $i=1,\ldots,N_k$. Moreover, set
	\[V_k:=\{ v \in L^p(0,T;\R^n)\colon \Pi_{k}(v) \in
	C_{D,\Pi_k}\}\;.\]
	Then
	\begin{equation}\label{eq:convD}
	\overline{\conv (D)}^{ L^p(0,T;\R^n)} = \bigcap_{k\in \N} V_k\;.
	\end{equation}
	
\end{theorem}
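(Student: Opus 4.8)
The plan is to prove the two assertions separately: the monotonicity $V_k\supseteq V_{k+1}$ and the inclusion $\overline{\conv(D)}^{L^p(0,T;\R^n)}\subseteq\bigcap_k V_k$ are quick, while the reverse inclusion $\bigcap_k V_k\subseteq\overline{\conv(D)}^{L^p(0,T;\R^n)}$ carries the weight.

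\textbf{Monotonicity.} First I would use the nestedness (iii) together with the covering property (i) to note that, for fixed $k$, each level-$k$ interval $I^k_i$ agrees up to a null set with the union of those level-$(k+1)$ intervals contained in it: since the closures $\overline{I^{k+1}_r}$ cover $[0,T]$ and distinct level-$k$ intervals are disjoint, any level-$(k+1)$ interval meeting $I^k_i$ must lie inside it, and the points of $I^k_i$ left uncovered are finitely many endpoints. This yields the averaging identity $\tfrac{1}{\lambda(I^k_i)}\int_{I^k_i}u_j\,\d t=\sum_r\tfrac{\lambda(I^{k+1}_r)}{\lambda(I^k_i)}\,\tfrac{1}{\lambda(I^{k+1}_r)}\int_{I^{k+1}_r}u_j\,\d t$, i.e.\ $\Pi_k=L_k\circ\Pi_{k+1}$ for a fixed linear map $L_k\colon\R^{M_{k+1}}\to\R^{M_k}$ with nonnegative entries and unit row sums. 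As linear images commute with the convex-hull operation, $C_{D,\Pi_k}=L_k(C_{D,\Pi_{k+1}})$, so $\Pi_{k+1}(v)\in C_{D,\Pi_{k+1}}$ implies $\Pi_k(v)=L_k\Pi_{k+1}(v)\in C_{D,\Pi_k}$, that is $V_{k+1}\subseteq V_k$. Iterating gives the factorization $\Pi_m=L_{m,k}\circ\Pi_k$ for all $k\ge m$ (with $L_{m,k}:=L_m\circ\cdots\circ L_{k-1}$), which I will reuse below. The inclusion $\overline{\conv(D)}^{L^p(0,T;\R^n)}\subseteq\bigcap_k V_k$ is then immediate from Lemma~\ref{lem: convD} applied to each $\Pi_k$.

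\textbf{The reverse inclusion.} Here I would fix $v\in\bigcap_k V_k$ and, for each $k$, write $\Pi_k(v)\in C_{D,\Pi_k}=\conv\{\Pi_k(u):u\in D\}$ as a finite convex combination $\sum_l\mu^k_l\Pi_k(u^{k,l})$ with $u^{k,l}\in D$; setting $w^k:=\sum_l\mu^k_l u^{k,l}\in\conv(D)$, linearity of $\Pi_k$ gives $\Pi_k(w^k)=\Pi_k(v)$. The sequence $\{w^k\}$ is bounded in $BV(0,T;\R^n)$ (by convexity of the $BV$-norm and \eqref{eq:D1}) and satisfies $w^k(t)\in[0,1]^n$ a.e.; hence, exactly as in the proof of Theorem~\ref{thm:discr_ctrl}, the compact embedding $BV(0,T;\R^n)\embed L^1(0,T;\R^n)$ together with the uniform $L^\infty$-bound yields a subsequence with $w^k\to w$ strongly in $L^p(0,T;\R^n)$, and in particular $w\in\overline{\conv(D)}^{L^p(0,T;\R^n)}$. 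For fixed $m$ and all $k\ge m$ we have $\Pi_m(w^k)=L_{m,k}\Pi_k(w^k)=L_{m,k}\Pi_k(v)=\Pi_m(v)$, so passing to the limit along the subsequence and using continuity of $\Pi_m$ on $L^p(0,T;\R^n)$ gives $\Pi_m(w)=\Pi_m(v)$; equivalently $\int_{I^m_i}(w_j-v_j)\,\d t=0$ for all $m$, $i$, $j$.

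\textbf{Identification $w=v$ and the main obstacle.} The delicate step, and the one that genuinely uses hypotheses (i)--(iii), is to upgrade these interval-average identities to $w=v$ a.e. Let $E_k$ be the finite set of endpoints of the level-$k$ intervals; by (iii) the $E_k$ are nested, by (i) each contains $0$ and $T$, and by (ii) their union $E$ is dense in $[0,T]$. For any $a<b$ in $E$, choosing $k$ with $a,b\in E_k$, the interval $(a,b)$ is, up to finitely many points, the disjoint union of the level-$k$ intervals contained in it, so $\int_a^b(w_j-v_j)\,\d t=0$; taking $a=0$ shows that the continuous primitive $t\mapsto\int_0^t(w_j-v_j)\,\d s$ vanishes on the dense set $E$, hence identically, so $w_j=v_j$ a.e.\ for every $j$. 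Therefore $v=w\in\overline{\conv(D)}^{L^p(0,T;\R^n)}$, which completes the proof. The only additional point to watch throughout is that $\Pi_k$ varies with $k$, but this is precisely handled by the factorization $\Pi_m=L_{m,k}\circ\Pi_k$ obtained in the monotonicity step.
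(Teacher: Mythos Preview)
Your argument is correct and follows the same overall strategy as the paper: build $w^k\in\conv(D)$ with $\Pi_k(w^k)=\Pi_k(v)$, extract a convergent subsequence, and identify the limit with~$v$ via the interval-average identities propagated down the nested hierarchy. Two technical choices differ slightly from the paper. First, you invoke the $BV$-bound from~\eqref{eq:D1} and the compact embedding $BV\embed L^1$ to obtain \emph{strong} $L^p$-convergence of a subsequence of $\{w^k\}$, whereas the paper uses only the box constraint $w^k\in[0,1]^n$ to extract a \emph{weakly} convergent subsequence and then appeals to weak closedness of $\overline{\conv(D)}^{L^p}$; both are valid, and your route makes membership of the limit in $\overline{\conv(D)}^{L^p}$ immediate. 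Second, for the identification $w=v$ you use the continuous primitive $t\mapsto\int_0^t(w_j-v_j)\,\d s$ and its vanishing on the dense set of partition endpoints, while the paper shows $\int_U(w-v)\,\d t=0$ for every open $U$ via an exhaustion by unions of subordinate intervals and then passes to arbitrary measurable sets by outer regularity; your version is shorter and uses (i)--(iii) in the same essential way.
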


\begin{proof}
	The inclusion \grqq$\subseteq$\grqq\ in \eqref{eq:convD} follows directly from
	Lemma~\ref{lem: convD}, it thus remains to show \grqq$\supseteq$\grqq. For this,
	let  
	\[u\in\bigcap_{k\in \N} V_k\;.
	\]    
	By definition of $u$, we have $\Pi_{k}(u)\in C_{D,\Pi_{k}}$. Hence, there exist~$v_l^k \in D$ for $l=1,\ldots,m$, where $m=m(k)\in\N$ may depend on $k$, as well coefficients $\mu^k_l \geq 0$ with  $\sum_{l=1}^m \mu^k_l=1$ and 
	\[
	\Pi_{k}(u) = \sum_{l=1}^m \mu^k_l\, \Pi_{k}(v_l^k)\;.
	\] Set $u^k := \sum_{l=1}^m \mu_l^k v_l^k \in \conv(D)$. By construction and the linearity of the projection, we have $\Pi_k(u^k)=\Pi_k(u)$, \ie 
	\begin{equation}\label{eq:PiNeq}
	\int_{I_i^k} (u^k - u)\,\d t = 0\quad \forall\, i=1,\ldots,N_k, \;
	k\in \N\;.
	\end{equation}
	Let $k\in \N$ be fixed. Thanks to  assumption (i), we conclude that for every $\ell\in \N$ it holds 
	\[
	\lambda \Big(I_i^\ell \setminus \bigcup_{I_r^k\subset I_i^\ell} I_r^k\Big)\leq 2\max_{r=1,\dots,N_k} \lambda(I_r^k)\;.
	\]
	Set $E^l_i:=\bigcup_{I_r^k\subset I_i^\ell} I_r^k$ for all $\ell \in \N$ and $i=1,\ldots,N_\ell$.
	Then \eqref{eq:PiNeq} implies 
	\begin{equation*}
	\begin{aligned}
	\int_{I_i^\ell} (u^k - u)\, \d t&=  \int_{I_i^\ell\setminus E^l_i} (u^k - u)\, \d t + \int_{E^l_i} (u^k - u)\, \d t\ \\ &=  \int_{I_i^\ell\setminus E^l_i} (u^k - u)\, \d t
	\end{aligned}
	\end{equation*} 
	and thus
	\begin{equation}\label{eq:PiLeq}
	\begin{aligned}
	\Big| \int_{I_i^\ell} (u^k - u)\, \d t\Big|\leq\int_{I_i^\ell\setminus E^l_i} |u^k - &u|\, \d t \leq \lambda(I_i^\ell\setminus E^l_i) \\&\leq 2 \max_{r=1,\dots,N_k} \lambda(I_r^k) \quad \forall i=1,\ldots,N_\ell,\ \ell \in \N
	\end{aligned}
	\end{equation}
	Since $u^k(t)\in[0,1]^n$ holds almost everywhere in $(0,T)$, there
	exists a weakly convergent subsequence, which we denote by the same
	symbol for simplicity, with $u^k \weak \tilde{u}$ in~$
	L^p(0,T;\R^n)$. Together with~\eqref{eq:PiLeq} and $\max_{r=1,\dots,N_k}\lambda(I_r^k)\to 0\; \mbox{ for }
	k\to \infty$, the weak convergence
	of $\{u^k\}_{k\in\N}$ to $\tilde{u} $ implies
	\begin{equation}\label{eq:wutildeueq}
	\int_{I_i^\ell} (\tilde u - u)\, \d t = 0\quad \forall\, i=1,\ldots,N_\ell, \; \ell\in \N\;.
	\end{equation} 
	It is well known that the span of the characteristic functions $\chi_{I_i^\ell}$, $i=1,\ldots,N_\ell$ $\ell\in\mathbb{N}$, is dense in $L^p(0,T)$, so that \eqref{eq:wutildeueq} immediately yields $u=\tilde{u}$ in $L^p(0,T;\R^n).$ We thus obtain $u^k
	\weak u$ in $L^p(0,T;\R^n)$. The set $\overline{\conv D}^{L^p(0,T;\R^n)}$ is convex and closed, thus weakly closed, so that we deduce $u\in\overline{\conv D}^{L^p(0,T;\R^n)}$.
\end{proof}

Our aim is to exploit the result of Theorem~\ref{thm: convD} in order to
obtain outer descriptions of the convex hull of~$D$ in function space
from outer descriptions of finite-dimensional sets of the
form~$C_{D,\Pi}$. This approach is particularly appealing in case~$C_{D,\Pi}$
is a polyhedron. Before discussing some relevant classes of
constraints where this holds true, we first show that polyhedricity cannot be
guaranteed in general. In fact, the following construction shows that
every closed convex set~$K\subseteq[0,1]^M$ can arise as~$C_{D,\Pi}$
for some feasible set~$D$.
\begin{example}\label{ex: nonpoly}
	Let~$M\in\N$ and~$K\subseteq[0,1]^M$ be a closed convex set. Define~$T=M$ and
	\[
	\begin{aligned}
	D_{K} := \big\{ u \in BV(0,T)\colon \;
	& u(t) \in \{0,1\} \text{ f.a.a.\ } t \in (0,T),\\
	& |u|_{BV(0,T)} \le M,\; \textstyle(\int_{i-1}^{i}u\, \d t)_{i=1}^M\in K \big\}\;.
	\end{aligned}
	\]
	By definition, the set~$D_K$ satisfies Assumption~\eqref{eq:D1}. Also
	Assumption~\eqref{eq:D2} is easy to verify for arbitrary
	$p\in[1,\infty)$, using the closedness of $K$ and Proposition
	10.1.1(i) in~\cite{ATT14}, which guarantees, for any
	sequence~$\{u^k\}_{k\in \N}\subset D_K$ converging to some $u$
	in~$L^p(0,T)\embed L^1(0,T)$, that
	\[
	|u|_{BV(0,T)}  \leq  \liminf\limits_{k\to \infty}|u^k|_{BV(0,T)} \leq M\;.
	\]
	Defining the projection~$\Pi$ by local averaging on the
	intervals~$(i-1,i)$, $i=1,\dots,M$, we obtain~$\Pi(D_K)=K$ and
	hence, due to convexity of~$K$, we have~$K=C_{D_K,\Pi}$.
\end{example}

In the following subsections, we discuss two of the practically most
relevant classes of constraints~$D$ and investigate the associated
sets~$C_{D,\Pi}$. The first class includes~$D_{\max}$ as defined
in~\eqref{eq:Dex}, whereas the second class includes the minimum dwell
time constraints mentioned in the introduction.  For the remainder of
this section, we always assume that the intervals defining the
projection~$\Pi$ are pairwise disjoint.

\subsection{Pointwise combinatorial constraints}\label{ex: boundedshift}

By Assumption~\eqref{eq:D1}, the total number of shiftings of all
switches is bounded by some~$\sigma\in\N$. A relevant class of
constraints arises when the switches must additionally satisfy certain
combinatorial conditions at any point in time. As an example, it might
be required that two specific switches are never used at the same
time, or that some switch can only be used when another switch is also
used, \eg because they are connected in series.
More formally, we assume that a set~$U\subseteq\{0,1\}^n$ is given and consider the constraint
\[
D_{\max}^\Sigma(U) := \Big\{ u \in BV(0,T;\R^n)\colon \;
u(t) \in U \text{ f.a.a.\ } t \in (0,T),
\sum_{j=1}^n|u_j|_{BV(0,T)} \leq \smax\Big\}.
\]

\begin{lemma}
	The set $D_{\max}^\Sigma(U)$ satisfies Assumptions~\eqref{eq:D1} and~\eqref{eq:D2}.
\end{lemma}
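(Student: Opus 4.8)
The plan is to verify \eqref{eq:D1} and \eqref{eq:D2} separately, closely following the arguments already used in Theorem~\ref{thm:discr_ctrl} and in Example~\ref{ex: nonpoly}. The boundedness condition~\eqref{eq:D1} should be essentially immediate, whereas~\eqref{eq:D2} requires a short compactness-type argument.

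For \eqref{eq:D1}, I would note that every $u\in D_{\max}^\Sigma(U)$ satisfies $u(t)\in U\subseteq\{0,1\}^n$ for almost every $t\in(0,T)$, so that $\|u\|_{L^1(0,T;\R^n)}=\sum_{j=1}^n\int_0^T|u_j|\,\d t\le nT$, while $\sum_{j=1}^n|u_j|_{BV(0,T)}\le\smax$ holds by definition of the set. Adding the two estimates bounds $\|u\|_{BV(0,T;\R^n)}$ by $nT+\smax$ uniformly over $D_{\max}^\Sigma(U)$.

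For \eqref{eq:D2}, fix any $p\in[1,\infty)$ and take a sequence $\{u^k\}_{k\in\N}\subset D_{\max}^\Sigma(U)$ with $u^k\to u$ in $L^p(0,T;\R^n)$. Since $(0,T)$ has finite measure, $L^p(0,T;\R^n)\embed L^1(0,T;\R^n)$, so $u^k\to u$ also in $L^1(0,T;\R^n)$, and after passing to a subsequence (not relabeled) I may assume $u^k\to u$ almost everywhere in $(0,T)$ by \cite[Lemma~3.22]{ALT16}. As $U$ is a \emph{finite} subset of $\{0,1\}^n$, it is closed, so the pointwise almost-everywhere limit inherits the membership $u(t)\in U$ for almost all $t$. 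To control the variation, I would invoke the lower semicontinuity of the BV-seminorm under $L^1$-convergence, \cite[Prop.~10.1.1(i)]{ATT14}, which gives $|u_j|_{BV(0,T)}\le\liminf_{k\to\infty}|u_j^k|_{BV(0,T)}$ for every $j$; summing over the finitely many components and using the superadditivity $\liminf_k(a_k+b_k)\ge\liminf_k a_k+\liminf_k b_k$ yields
\[
\sum_{j=1}^n|u_j|_{BV(0,T)} \ \le\ \liminf_{k\to\infty}\sum_{j=1}^n|u_j^k|_{BV(0,T)} \ \le\ \smax\;.
\]
In particular $u\in BV(0,T;\R^n)$ with $u(t)\in U$ a.e., hence $u\in D_{\max}^\Sigma(U)$, which proves closedness in $L^p(0,T;\R^n)$.

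I do not expect a genuine obstacle here. The only points that need a little care are: the reduction to an almost-everywhere convergent subsequence (harmless, since the conditions defining $D_{\max}^\Sigma(U)$ are preserved under passing to subsequences of the approximating sequence); the observation that a finite set is automatically closed, which is what allows the pointwise constraint $u(t)\in U$ to pass to the limit; and the interchange of the finite sum with the $\liminf$, for which only the elementary superadditivity of $\liminf$ is required.
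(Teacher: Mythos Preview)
Your proof is correct and follows essentially the same route as the paper: lower semicontinuity of the BV-seminorm under $L^1$-convergence for the variation constraint, and passage to an a.e.\ convergent subsequence together with closedness of $U$ for the pointwise constraint. In fact, your argument is slightly more careful than the paper's, since you explicitly verify the bound on the \emph{sum} $\sum_{j=1}^n|u_j|_{BV(0,T)}\le\smax$ via superadditivity of $\liminf$, whereas the paper only records the componentwise bound $|u_j|_{BV(0,T)}\le\smax$.
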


\begin{proof}
	The set $D_{\max}^\Sigma(U)$ obviously satisfies
	\eqref{eq:D1}. Moreover, for any~$p\in[1,\infty)$,
	Proposition~10.1.1(i) in~\cite{ATT14} again guarantees for any
	sequence of controls $\{u^k\}_{k\in \N}\subset D_{\max}^\Sigma(U)$ in~$L^p(0,T;\R^n)\embed
	L^1(0,T;\R^n)$ that converges to some $u$ that
	\[
	|u_j|_{BV(0,T)}  \leq  \liminf\limits_{k\to \infty}|u_j^k|_{BV(0,T)} \leq \smax
	\]
	for $j=1,\ldots,n$, because of $\sup_{k\in \N} |u_j^k|_{BV(0,T)} \leq
	\smax$. Furthermore, since convergence in $L^p(0,T;\R^n)$ implies pointwise almost everywhere 
	convergence for a subsequence, the limit also satisfies $u(t) \in U$ f.a.a.\ $t\in (0,T)$.
	It follows that $D_{\max}^\Sigma(U)$ is closed in
	$L^p(0,T;\R^n)$ and thus fulfills \eqref{eq:D2}. 
\end{proof}

We now show that the projections~$\Pi$ defined in~\eqref{eq:pi} not
only lead to polytopes when applied to~$D_{\max}^\Sigma(U)$, but even
yield integer polytopes, \ie polytopes with integer vertices only.
\begin{theorem}\label{thm: dmax01}
	For any\/ $\Pi$ as in~\eqref{eq:pi}, the
	set~$C_{D_{\max}^\Sigma(U),\Pi}$ is a 0/1-polytope in~$\R^M$.
\end{theorem}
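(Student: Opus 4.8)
The plan is to prove that $C:=C_{D_{\max}^\Sigma(U),\Pi}$ is a compact convex set all of whose extreme points lie in~$\{0,1\}^M$. Since a compact convex subset of~$\R^M$ is the convex hull of its extreme points and $\{0,1\}^M$ is finite, this immediately gives that $C$ is the convex hull of finitely many $0/1$-points, that is, a $0/1$-polytope. Compactness is quick: $C$ is closed by Lemma~\ref{lem: finiteclosed}, and it is bounded because each component $\langle\Phi_i,u\rangle$ is an average of $u_{j_i}\in\{0,1\}$, whence $\Pi(D_{\max}^\Sigma(U))\subseteq[0,1]^M$ and, by convexity, $C\subseteq[0,1]^M$. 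So the entire argument is about the extreme points.

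First I would pass from an extreme point to a switching control. If $v$ is extreme in $C=\conv\{\Pi(u):u\in D_{\max}^\Sigma(U)\}$, then by Carathéodory's theorem $v=\sum_l\mu_l\,\Pi(u_l)$ for finitely many $u_l\in D_{\max}^\Sigma(U)$ and weights $\mu_l>0$ with $\sum_l\mu_l=1$; extremality of~$v$ forces $\Pi(u_l)=v$ for every~$l$, so $v=\Pi(u)$ for some $u\in D_{\max}^\Sigma(U)$. It is convenient to fix the right-continuous step-function representative of~$u$: since $u$ is $\{0,1\}^n$-valued almost everywhere and of bounded variation, each~$u_j$ admits such a representative with exactly $|u_j|_{BV(0,T)}$ jumps, and, $U$ being finite, the resulting vector-valued step function attains all of its finitely many values in~$U$.

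The heart of the proof is to show $v=\Pi(u)\in\{0,1\}^M$. Suppose not, say $\langle\Phi_i,u\rangle=\tfrac{1}{\lambda(I_i)}\int_{I_i}u_{j_i}\,\d t\in(0,1)$ for some index~$i$, and put $j:=j_i$. Then $u_j$ is not almost everywhere constant on the open interval~$I_i$, so its step representative has a genuine jump at some~$\tau\in I_i$; pick $\delta>0$ small enough that $(\tau-\delta,\tau+\delta)\subseteq I_i$ contains no other jump of~$u$, so that $u\equiv w^L$ on $(\tau-\delta,\tau)$ and $u\equiv w^R$ on $(\tau,\tau+\delta)$ with $w^L,w^R\in U$ and $w^L_j\neq w^R_j$. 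I would then define $u^+$ and $u^-$ by shifting this jump from~$\tau$ to $\tau-\tfrac\delta2$ respectively $\tau+\tfrac\delta2$, with the orientation chosen so that the average of $u_j$ over~$I_i$ increases for~$u^+$ and decreases for~$u^-$, each by the same amount $\tfrac{\delta}{2\lambda(I_i)}$, and leaving $u$ unchanged outside $(\tau-\delta,\tau+\delta)$. The decisive point — and the place where the coupling imposed by an \emph{arbitrary}~$U\subseteq\{0,1\}^n$ enters — is that we relocate the \emph{entire} vector-valued jump, so $u^\pm$ still take values only in~$U$; moreover each coordinate's BV-seminorm is unchanged, since a jump of the same height is merely moved, so $u^\pm\in D_{\max}^\Sigma(U)$. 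Because both modifications are confined to~$I_i$, which by our standing assumption is disjoint from the remaining intervals, $\Pi(u^+)$ and $\Pi(u^-)$ agree with $v=\Pi(u)$ in every coordinate except the $i$-th, where they take the values $v_i\pm\tfrac{\delta}{2\lambda(I_i)}$. Hence $v=\tfrac12\Pi(u^+)+\tfrac12\Pi(u^-)$ with $\Pi(u^+)\neq\Pi(u^-)$, contradicting the extremality of~$v$. Thus all extreme points of~$C$ are $0/1$-vectors, which finishes the proof.

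I expect the technical care to concentrate on two measure-theoretic points: that a $\{0,1\}$-valued BV function with non-integer average over~$I_i$ really does jump in the interior of~$I_i$ (a short argument via the step representative and the fact that both level sets $\{u_j=0\}\cap I_i$ and $\{u_j=1\}\cap I_i$ have positive measure), and that shifting the vector-valued jump neither increases any $|u_j^\pm|_{BV(0,T)}$ nor creates a value outside~$U$ — the latter being exactly why a single-coordinate perturbation does not work and the jump must be moved as a whole.
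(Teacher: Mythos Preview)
Your proof is correct and takes a genuinely different route from the paper's. The paper argues constructively: it fixes the target set
\[
K=\{\Pi(u)\colon u\in D_{\max}^\Sigma(U)\text{ and }u\text{ is constant on each }I_i\}\subseteq\{0,1\}^M
\]
and shows $C_{D_{\max}^\Sigma(U),\Pi}=\conv(K)$ by induction on the number of intervals~$I_\ell$ on which $u$ switches. In the inductive step, one interval $I_\ell$ with switchings is partitioned into its constant pieces with values $w_1,\dots,w_s\in U$, and $\Pi(u)$ is written as the convex combination $\sum_k\tfrac{\lambda(I_\ell^k)}{\lambda(I_\ell)}\,\Pi(u^k)$, where $u^k$ replaces $u$ on all of~$I_\ell$ by the constant~$w_k$. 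The key observation is that each $u^k$ still lies in $D_{\max}^\Sigma(U)$: the total variation cannot increase (a triangle-inequality argument), and the values stay in~$U$. By contrast, you work at the level of extreme points and use a perturbation: given a fractional coordinate $v_i$, you locate a genuine jump of $u_{j_i}$ inside~$I_i$ and slide the \emph{entire} vector-valued jump left and right, producing $u^\pm\in D_{\max}^\Sigma(U)$ with $\Pi(u)=\tfrac12\Pi(u^+)+\tfrac12\Pi(u^-)$ and $\Pi(u^+)\neq\Pi(u^-)$. Both arguments hinge on exactly the same structural insight---namely that feasibility in~$U$ forces one to manipulate the control as a whole vector rather than coordinatewise---but exploit it differently. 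The paper's decomposition has the advantage of explicitly identifying the vertex set $K$ (useful downstream for separation), whereas your extreme-point argument is shorter and sidesteps the induction entirely. Your handling of the two delicate points---that a fractional average forces an interior jump of the step representative, and that shifting the vector jump preserves both the $BV$-seminorms and membership in~$U$---is exactly right, and the pairwise disjointness of the~$I_i$ is precisely what guarantees that only the $i$-th coordinate of~$\Pi$ is affected.
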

\begin{proof}
	We claim that~$C_{D_{\max}^\Sigma(U),\Pi}=\conv(K)$, where
	\[
	\begin{aligned}
	K:=\{\Pi(u) \colon & u \in D_{\max}^\Sigma(U) \text{ and for all }i=1,\ldots,M \text{ there exists } w_i\in U \\
	& \text{with } u(t)\equiv w_i \text{ f.a.a.\ } t\in I_i\}\;.
	\end{aligned}
	\]
	From this, the result follows directly, as~$K\subseteq\{0,1\}^M$
	holds by definition.
	
	The direction \grqq$\supseteq$\grqq\ is trivial, since $K$ is a
	subset of $\{\Pi(u) \colon u \in D_{\max}^\Sigma(U)\}$. It thus
	remains to show \grqq$\subseteq$\grqq. For this, let $u\in
	D_{\max}^\Sigma(U)$. We need to show that $\Pi(u)$ can be written as
	a convex combination of vectors in~$K$. Let $m\in\{0,\dots,M\}$ denote the
	number of intervals in which at least one of the switches is
	shifted in~$u$. We prove the assertion by means of complete induction over
	the number $m$. For $m=0$, we clearly have $\Pi(u)\in K\subseteq \conv(K)$.
	
	So let the number of intervals in which at least one of the switches
	is shifted be $m+1$. Additionally, let $\ell\in\{1,\ldots,M\}$ be an
	index so that at least one switch is shifted in the interval
	$I_\ell$. Since we have the upper bound $\smax$ on the total number
	of shiftings, only finitely many shiftings can be in the interval
	$I_\ell$. Hence, $I_\ell$ can be divided into disjoint subintervals
	$I^1_\ell,\ldots,I^s_\ell$ such that
	$\overline{I_\ell}=\bigcup_{k=1}^{s} \overline{I^k_\ell}$ and there
	exist $w_k\in U$ with $u(t)=w_k$ f.a.a.\ $t\in I^k_\ell$, $1\leq k
	\leq s$. Define functions $u^k$ for $k=1,\ldots,s$ as follows:
	\[u^k(t):=\begin{cases}
	w_k & \text{if }t\in I_\ell \\ 
	u(t) & \text{otherwise}\;. 
	\end{cases}
	\]
	Due to $u\in U$ a.e.~in $(0,T)$ and $w_k\in U$, $u^k(t)$ is
	a vector in~$U$ f.a.a.\ $t\in(0,T)$ and  for
	$k=1,\ldots,s$. Furthermore, $u^k$ has at most as many shiftings as
	$u$ in total and we thus obtain $u^k \in D_{\max}^\Sigma(U)$.  By
	construction, we have
	\[\tfrac{1}{\lambda(I_\ell)}\int_{I_\ell} u(t)\ \d t =
	\tfrac{1}{\lambda(I_\ell)} \sum_{k=1}^s \int_{I^k_\ell} w_k\ \d t
	=\sum_{k=1}^s \tfrac{\lambda(I^k_\ell)}{\lambda(I_\ell)} w_k\] with
	$\nicefrac{\lambda(I^k_\ell)}{\lambda(I_\ell)}\geq 0$ for every
	$k\in \{1,\ldots,s\}$ and $\sum_{k=1}^s
	\nicefrac{\lambda(I^k_\ell)}{\lambda(I_\ell)}=1$. Since the control
	is unchanged on the other intervals $I_i$, $i\neq \ell$, we obtain
	$\Pi(u)=\sum_{k=1}^s
	\nicefrac{\lambda(I^k_\ell)}{\lambda(I_\ell)}\Pi(u^k)$. The
	functions $u^k$ have no shifting in $I_\ell$ so that the number of
	intervals in which at least one of the switches is shifted is at
	most~$m$. According to the induction hypothesis, the
	vectors~$\Pi(u^k)$ can be written as a convex combinations of vectors in~$K$ and consequently, due to $\Pi(u)=\sum_{k=1}^s
	\nicefrac{\lambda(I^k_\ell)}{\lambda(I_\ell)}\Pi(u^k)$, $\Pi(u)$ is
	also a convex combination of vectors in~$K$.
\end{proof}
It is easy to see that Theorem~\ref{thm: dmax01} also extends to the
constraint~$D_{\max}$ defined in~\eqref{eq:Dex}. Indeed, whenever the
constraint set~$D$ is defined by switch-wise constraints as
in~\eqref{eq:Dex}, polyhedricity and integrality can be verified for
each switch individually, in which case~$D_{\max}$ reduces
to~$D_{\max}^\Sigma(\{0,1\})$.

The fact that~$C_{D_{\max}^\Sigma(U),\Pi}$ is a polytope allows, in
principle, to describe it by finitely many linear
inequalities. However, the number of its facets may be exponential
in~$n$ or~$M$, so that a separation algorithm will be needed for the
outer approximation algorithm presented in the companion paper~\cite{partII}. It
depends on the set~$U$ whether this separation problem can be
performed efficiently. E.g., if~$U$ models arbitrary conflicts between
switches that may not be used simultaneously, the separation problem
turns out to be NP-hard, since~$U$ can model the independent set
problem in this case.

Even for~$n=1$ and~$U=\{0,1\}$, the separation problem is
non-trivial. In this case, the set~$K$ defined in Theorem~\ref{thm: dmax01}
consists of all binary sequences~$v_1,\dots,v_M\in\{0,1\}$ such
that~$v_{i-1}\neq v_i$ for at most~$\smax$
indices~$i\in\{2,\dots,M\}$. For the slightly different setting where
$v_1$ is fixed to zero, it is shown in~\cite{BH23} that the
separation problem for~$\conv(K)$ and hence for~$C_{D_{\max},\Pi}$ can
be solved in polynomial time. More precisely, a complete linear
description of~$C_{D_{\max},\Pi}$ is given by~$v\in[0,1]^M$,
$v_1=0$, and inequalities of the form
\begin{equation}\label{eq:alt}
\sum_{j=1}^m (-1)^{j+1} v_{i_j} \leq \Big\lfloor \frac{\smax}{2}
\Big \rfloor\;,
\end{equation} 
where~$i_1,\dots,i_m\in \{2,\dots,M\}$ is an increasing sequence of
indices with~$m-\smax$ odd and $m>\smax$. For given~$\bar
v\in[0,1]^M$, a most violated inequality of the form~\eqref{eq:alt} is
obtained by choosing~$\{i_1,i_3,\dots\}$ as the local maximizers
of~$\bar v$ and~$\{i_2,i_4,\dots\}$ as the local minimizers of~$\bar
v$ (excluding~$1$); such an inequality can thus be computed in~$O(M)$ time.
This separation algorithm is used in Section~\ref{sec: qbounds} to investigate the strength of our convex relaxation.

\subsection{Switching point constraints}\label{ex: combswpoint}

In this section, we focus on the case~$n=1$. 
It is well known that a function $u \in BV(0,T)$ admits a right-continuous representative given by 
$\hat u(t) = c+\mu([0,t])$, $t\in (0,T)$, where $\mu$ is the regular Borel measure on $[0,T]$
associated with the distributional derivative of $u$ and $c\in \R$ a constant. Note that $\hat u$ is unique on~$(0,T)$.
Given $u \in BV(0,T)$ with its right-continuous representative $\hat u$, 
we denote the essential jump set of $u$ by
\[
J_u := \Big\{ t\in (0,T) \colon\; \lim_{\tau \nearrow t} \hat u(\tau) \neq \lim_{\tau \searrow t}\hat{u}(\tau) \Big\}.
\]
In the following, we assume that $u\in BV(0,T)$ always starts with zero. More formally, if $\lim_{\tau \searrow 0}\hat{u}(\tau)=1$, we already count this as one switching from zero to one and add a switching point $t=0$ to $J_u$.
If $J_u$ is a finite set, we denote its cardinality by $|J_u|$. 
For the rest of this section, let~$\sigma \in \N$ be given. 

\begin{definition}
	Let $0 \leq t_1\leq \ldots \leq t_\sigma < \infty$ be given and set 
	\[
	\begin{aligned}[t]
	\eta_\le : \R \to \{0, \ldots, \sigma\}, & &         
	\eta_\le(t) & := |\{i \in \{1, \ldots, \sigma\} \colon \, t_i \le t \}| \\
	\eta_= : \R \to \{0, \ldots, \sigma\}, & &         
	\eta_=(t) & := |\{i \in \{1, \ldots, \sigma\} \colon \, t_i = t \}|    
	\end{aligned}
	\]
	with the usual convention $|\emptyset| = 0$.
	Then we define the function $u_{t_1,\dots,t_\sigma}$ by
	\begin{equation}\label{eq:urepr_def}
	\begin{aligned}
	& u_{t_1,\dots,t_\sigma}\colon [0, T] \to \{0,1\} ,\\
	& u_{t_1,\dots,t_\sigma}(t) := 
	\begin{cases}
	0, & \text{if \,$\eta_\le(t)$ is even},\\
	1 , & \text{if\, $\eta_\le(t)$ is odd}.
	\end{cases}                
	\end{aligned}
	\end{equation}    
\end{definition}

It is easy to verify that $u_{t_1,\dots,t_\sigma}$ is a representative of $u$. Moreover, the function is right-continuous by construction, so that it agrees with the unique right-continuous representative~$\hat{u}\in[u]$ on $(0,T)$.
Now, given any polytope~$P\subseteq \R_+^\sigma$, we define the 
\emph{set of switching point constraints} by
\[ \begin{aligned}
D_P := \{ u \in BV(0,T;\{0,1\}) \colon \; 
& \exists\, 0\leq t_1\leq \cdots\leq t_\sigma < \infty\\ &  \text{ s.t.\ }  (t_1, \ldots, t_\sigma) \in P, \ u_{t_1,\dots,t_\sigma} \in [u] \}\;.
\end{aligned}
\]

\begin{lemma}
	The set $D_P$ satisfies the assumptions in \eqref{eq:D1} and \eqref{eq:D2}.
\end{lemma}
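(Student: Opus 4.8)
The plan is to treat the two assumptions separately, with (D2) being the substantive part. For \eqref{eq:D1}, I would argue that $\sigma$ switching points generate at most $\sigma$ essential jumps, so $|u|_{BV(0,T)} \le \sigma$ for every $u \in D_P$; combined with $u(t)\in\{0,1\}$ (hence $\|u\|_{L^1(0,T)}\le T$) this gives a uniform bound on $\|u\|_{BV(0,T;\R^n)}$, so $D_P$ is bounded. This is essentially immediate from Lemma~\ref{lem:urepr} and the fact that $P$ is contained in the bounded set $\R_+^\sigma$ is actually \emph{not} needed here — only that there are at most $\sigma$ jumps.

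For \eqref{eq:D2}, fix $p\in[1,\infty)$ and let $\{u^k\}_{k\in\N}\subset D_P$ converge to some $u$ in $L^p(0,T)$. By \eqref{eq:D1} the sequence is bounded in $BV(0,T)$, and since $|u|_{BV(0,T)}\le\sigma$ by the lower semicontinuity of the BV-seminorm (Proposition~10.1.1(i) in~\cite{ATT14}), we have $u\in BV(0,T;\{0,1\})$ after passing to a subsequence converging a.e., so the essential jump set $J_u$ is finite with $|J_u|\le\sigma$. For each $k$ pick $b^k\in\{0,1\}$ and $(t_1^k,\dots,t_\sigma^k)\in P$ witnessing $u^k\in D_P$. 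Since $P\subseteq\R_+^\sigma$ is a polytope, hence compact, and $b^k$ ranges over a finite set, I can pass to a further subsequence so that $b^k\to b\in\{0,1\}$ (eventually constant) and $(t_1^k,\dots,t_\sigma^k)\to(t_1,\dots,t_\sigma)\in P$. The goal is then to show that $b$ and $(t_1,\dots,t_\sigma)$ witness $u\in D_P$, i.e.\ that $u_{b;t_1,\dots,t_\sigma}|_{[0,T]}\in[u]$; by Lemma~\ref{lem:urepr} this means checking conditions (0)--(3) for the limit data.

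The main obstacle is condition~(2) — the parity bookkeeping at the limiting switching points, which is not stable under naive passage to the limit: distinct $t_i^k$ may collide in the limit, changing $\eta_=$, and a $t_i^k$ with $t_i^k<T$ may converge to a point $\ge T$ (or vice versa). I would handle this by working directly with the functions: on any compact subinterval of $(0,T)$ avoiding the finitely many limit points $\{t_1,\dots,t_\sigma\}$, the convergence $t_i^k\to t_i$ forces $u^k$ to be eventually constant and equal to $u_{b;t_1,\dots,t_\sigma}$ there (for $k$ large the jump points $t_i^k$ have also left that subinterval), hence $u = u_{b;t_1,\dots,t_\sigma}$ a.e.\ on $(0,T)\setminus\{t_1,\dots,t_\sigma\}$, which is all of $(0,T)$ up to a null set. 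This shows $u_{b;t_1,\dots,t_\sigma}|_{[0,T]}\in[u]$ directly, bypassing the need to verify (0)--(3) by hand and therefore sidestepping the delicate collision/boundary cases; one only needs the elementary observation that if $t_i^k\to t_i$ then for any $\varepsilon>0$ eventually all $t_i^k\in\bigcup_i(t_i-\varepsilon,t_i+\varepsilon)$. Together with the (D1) bound this yields $u\in D_P$, so $D_P$ is closed in $L^p(0,T)$ and \eqref{eq:D2} holds.
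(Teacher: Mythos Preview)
Your argument is correct and actually takes a cleaner route than the paper's proof. Both proofs handle \eqref{eq:D1} identically and set up \eqref{eq:D2} the same way: extract a subsequence along which $u^k\to u$ a.e.\ and, by compactness of $P$, $(b^k,t^k)\to(\bar b,\bar t)\in\{0,1\}\times P$. The divergence is in how membership $u\in D_P$ is concluded. The paper proceeds by verifying conditions (0)--(3) of Lemma~\ref{lem:urepr} for $(\bar b,\bar t)$ one by one, which requires two separate contradiction arguments (one for the parity condition~(2) at points of $J_u$, one for the initial-value condition~(3)), each involving integral estimates over shrinking neighborhoods. You instead observe that on any point $t\in(0,T)\setminus\{\bar t_1,\dots,\bar t_\sigma\}$ the count $\eta_\le^k(t)$ stabilizes to $\eta_\le(t)$ for large $k$ (since each $t_i^k$ eventually lies on the same side of $t$ as its limit $\bar t_i$), whence $u^k(t)=u_{b^k;t^k}(t)\to u_{\bar b;\bar t}(t)$ pointwise there; combined with the a.e.\ convergence to $u$, this gives $u=u_{\bar b;\bar t}$ a.e.\ directly. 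Your approach bypasses the parity bookkeeping entirely and is more elementary; the paper's approach has the minor advantage of making explicit which of the abstract conditions (1)--(3) survives the limit, but this is not needed for the lemma as stated.
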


\begin{proof}
	Since $u\in \{0,1\}$ a.e.~in $(0,T)$ and $|J_u| \leq \sigma$  holds
		for all $u\in D_P$ by construction, every $u\in D_P$ satisfies
	$|u|_{BV(0,T)} \leq \sigma$ such that \eqref{eq:D1} is fulfilled.
	
	To verify \eqref{eq:D2}, consider a sequence $\{u^k\}\subset D_P$ with $u^k \to u$ in $L^p(0,T)$. 
	From \eqref{eq:D1} and~\cite[10.1.1(i)]{ATT14}, we deduce $u \in BV(0,T)$.
	Moreover, there is a subsequence, denoted by the same symbol for convenience,
	such that the sequence of representatives~$\{u_{t^k_1,\dots,t^k_\sigma}\}$ converges pointwise 
	almost everywhere in $(0,T)$ to $u$. This yields $u\in \{0,1\}$ a.e.~in $(0,T)$. 
	Furthermore, as a polytope, $P$ is compact by definition, so that
	there is yet another subsequence such that
	$t^k := (t_1^k, \ldots, t_\sigma^k)$ converges to $\bar t\in \R^\sigma$ 
	with $0 \leq \bar t_1 \leq \ldots \leq \bar t_\sigma < \infty$ and $\bar t \in P$.
	The mapping $$P\ni(t_1,\ldots,t_\sigma) \mapsto u_{t_1.\ldots,t_\sigma}\in L^p(0,T)$$ is continuous, which can be seen as follows. If $\{(t_1^k, \ldots, t_\sigma^k)\}_{k\in \N}\subseteq P$ converges to some $\bar t\in \R^\sigma$, then for every $t\in (0,T)\setminus \{\bar t_1,\ldots,\bar t_\sigma\}$ it is clear that 
		$$|\{i\in\{1,\ldots,\sigma\}\colon t_{i}^k\leq t\}|=|\{i\in\{1,\ldots,\sigma\}\colon \bar{t}_{i}\leq t\}|$$
		holds for $k$ sufficient large, so that 
		$u_{t^k_1,\dots,t^k_\sigma}(t)\to u_{\bar t_1,\ldots,\bar t_\sigma}(t)$  for $k\to \infty$ follows by the definition of the representatives in~\eqref{eq:urepr_def}. Consequently,  $\{u_{t^k_1,\dots,t^k_\sigma}\}_{k\in \N}$ converges pointwise 
		almost everywhere to~$ u_{\bar t_1,\ldots,\bar t_\sigma}$ in~$(0,T)$. By Lebesgue's dominated convergence theorem, see, \eg~\cite[Lemma~3.25]{ALT16}, $\{u_{t^k_1,\dots,t^k_\sigma}\}_{k\in \N}$ then also converges strongly to~$ u_{\bar t_1,\ldots,\bar t_\sigma}$  in $L^p(0,T)$. Thus, we have $$u=\lim\limits_{k\to \infty} u^k=\lim\limits_{k\to \infty}u_{t^k_1,\dots,t^k_\sigma} = u_{\bar t_1,\ldots,\bar t_\sigma} \quad \text{in } L^p(0,T),$$ which gives $u\in D(P)$.
\end{proof}

\begin{theorem}\label{thm: combswpoint}
	For any\/ $\Pi$ as in~\eqref{eq:pi}, the
	set~$C_{D_P,\Pi}$ is a polytope in~$\R^M$.
\end{theorem}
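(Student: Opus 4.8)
The plan is to prove that the image $\Pi(D_P)\subset\R^M$ is a finite union of polytopes; since $C_{D_P,\Pi}=\conv\big(\Pi(D_P)\big)$ and the convex hull of a finite union of polytopes is again a polytope, the assertion follows at once. Throughout, write $t=(t_1,\dots,t_\sigma)$ and abbreviate $u_{b;t}:=u_{b;t_1,\dots,t_\sigma}$, and recall that, since $n=1$, each functional in \eqref{eq:localaveraging} is the average over an interval $I_i=(a_i,c_i)\subset(0,T)$.

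First I would reduce to a convenient parametrization. By Lemma~\ref{lem:urepr}, a function $u\in BV(0,T;\{0,1\})$ belongs to $D_P$ if and only if $u_{b;t}|_{[0,T]}\in[u]$ for some $b\in\{0,1\}$ and some $t$ in $P':=P\cap\{t\in\R^\sigma\colon 0\le t_1\le\dots\le t_\sigma\}$ (note that $u_{b;t}$ is only defined for ordered $t$, so $D_P$ depends only on $P'$); being the intersection of the polytope $P$ with finitely many halfspaces, $P'$ is again a polytope. Consequently,
\[
\Pi(D_P)=F_0(P')\cup F_1(P'),\qquad F_b(t):=\Pi\big(u_{b;t}|_{[0,T]}\big)\in\R^M .
\]

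Next I would show that, for each fixed $b\in\{0,1\}$, the map $F_b\colon\R^\sigma\to\R^M$ is continuous and affine on each cell of a finite polyhedral subdivision of $\R^\sigma$. By \eqref{eq:urepr_def}, the representative $u_{b;t}$ restricted to $[0,T]$ equals $1-b$ exactly on $\bigcup_{k\text{ odd}}[t_k,t_{k+1})$, with the convention $t_{\sigma+1}:=T$, and equals $b$ otherwise, so the $i$-th component of $F_b(t)$ is
\[
\frac{1}{\lambda(I_i)}\int_{I_i}u_{b;t}(s)\,\d s
= b+\frac{1-2b}{\lambda(I_i)}\sum_{k\text{ odd}}\big(\min\{c_i,t_{k+1}\}-\max\{a_i,t_k\}\big)^+ ,
\]
which handles coincidences $t_k=t_{k+1}$ automatically, the corresponding interval being empty. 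Each summand is a continuous function of $(t_k,t_{k+1})$ that is affine on each of the finitely many cells cut out by the hyperplanes $t_k=a_i$, $t_{k+1}=c_i$, $t_k=c_i$, $t_{k+1}=a_i$, $t_k=t_{k+1}$; taking the common refinement over all $M$ components and all summands yields the claimed subdivision.

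Finally I would conclude as follows. Let $C_1,\dots,C_R$ be the cells of such a subdivision, with $F_b|_{C_r}$ affine. Then each $P'\cap C_r$ is a polytope and its image under the affine map $F_b|_{C_r}$ is a polytope, so $F_b(P')=\bigcup_{r=1}^R F_b(P'\cap C_r)$ is a finite union of polytopes, and hence so is $\Pi(D_P)=F_0(P')\cup F_1(P')$. Writing $\Pi(D_P)=\bigcup_\ell\conv(V_\ell)$ with finite vertex sets $V_\ell$, one has $C_{D_P,\Pi}=\conv\big(\bigcup_\ell V_\ell\big)$, the convex hull of a finite point set, i.e.\ a polytope. (If $D_P=\emptyset$, the claim is trivial.) I expect the only delicate point to be the middle step — checking that the truncation at $T$ and the ties among the $t_k$ neither break the continuity of $F_b$ nor create infinitely many linearity cells — while the first and last steps use only that finite intersections and affine images of polytopes are polytopes.
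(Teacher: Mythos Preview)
Your proof is correct and follows essentially the same route as the paper: both arguments show that $\Pi(D_P)$ is a finite union of polytopes by decomposing the ordered parameter set $P'=P\cap\{0\le t_1\le\dots\le t_\sigma\}$ into finitely many cells on which $t\mapsto\Pi(u_{b;t})$ is affine, then taking the convex hull. The only cosmetic difference is how the cells are introduced---the paper indexes them by assignment maps $\varphi\colon\{1,\dots,\sigma\}\to\{1,\dots,r\}$ specifying which subinterval $[s_{\varphi(i)-1},s_{\varphi(i)}]$ each $t_i$ lies in (with $s_0<\dots<s_r$ the endpoints of the $I_j$ together with $0$ and $\infty$), while you reach the same cells via the piecewise-affine structure of the explicit formula $\big(\min\{c_i,t_{k+1}\}-\max\{a_i,t_k\}\big)^+$.
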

\begin{proof}
	Let~$0=s_0< s_1< \dots< s_{r-1}<s_r=\infty$ include all end points
	of the intervals~$I_{1}, \ldots, I_{M}$ defining~$\Pi$. Let~$\Phi$ be the set of all
	maps~$\varphi\colon\{1,\dots,\sigma\}\rightarrow\{1,\dots,r\}$. Then we have
	\begin{equation}\label{eq:decomp}
	\big\{ (t_1,\dots,t_\sigma)\in P\colon t_1\le\dots\le
	t_\sigma\big\} =\bigcup_{\varphi\in\Phi} P_\varphi
	\end{equation}
	with
	\[P_\varphi:=\big\{ (t_1,\dots,t_\sigma)\in P\colon t_1\le\dots\le
	t_\sigma,\;s_{\varphi(i)-1}\le t_i\le
	s_{\varphi(i)}\;\forall i=1,\dots,\sigma \big\}\;.\] Now each
	set~$P_\varphi$ is a (potentially empty) polytope. Moreover, by
	construction, the function~$P_\varphi\ni
	(t_1,\dots,t_\sigma)\mapsto\Pi(u_{t_1,\dots,t_\sigma})\in\R^M$ is
	linear, since
	\[
	\Pi(u_{t_1,\dots,t_\sigma})_j
	=  \tfrac{1}{\lambda(I_j)}\int_{I_j} u_{t_1,\dots,t_\sigma}(t)\,\d t\\
	=  \tfrac{1}{\lambda(I_j)}\sum_{\tiny\substack{i\in\{1,\dots,\sigma+1\}\\\text{ even}}}\int_{I_j}  \chi_{[t_i-t_{i-1}]}\,\d t
	\] for~$j=1,\dots,M$, where we set~$t_0:=0,t_{\sigma+1}:=\infty$,
	and $\int_{I_j} \chi_{[t_i-t_{i-1}]}\,\d t$ is linear 
	in~$t_i$ and~$t_{i-1}$ for a fixed assignment~$\varphi$.  It
	follows from~\eqref{eq:decomp} that~$\Pi(D_P)$ is a finite union of
	polytopes and hence its convex hull~$C_{D_P,\Pi}$ is a polytope
	again.
\end{proof}

An important class of constraints of type~$D_P$ are the minimum
dwell-time constraints. For a given minimum dwell time~$s>0$, it is
required that the time elapsed between two switchings is at
least~$s$. This implies, in particular, that the number of such
switchings is bounded by~$\sigma:=\lceil T/s\rceil$. We thus consider
the constraint
\[
\begin{aligned}
D_s:=\big\{ u\in BV(0,T)\colon &\exists \ t_1,\ldots,t_\sigma\geq 0 \\  &\text{ s.t.\ } t_{j}-t_{j-1}\ge s\; \forall \,j = 2, \dots,\sigma, \; u_{t_1,\dots,t_\sigma} \in [u] \big\}.
\end{aligned}
\]
By Theorem~\ref{thm: combswpoint}, the set~$C_{D_s,\Pi}$ is a polytope
in~$\R^M$. However, it is not a 0/1-polytope in general. As an
example, consider the time horizon~$[0,3]$ with
intervals~$I_j:=[j-1,j]$ for each~$j=1,2,3$ and let~$s=\tfrac 32$. Then it
is easy to verify that~$C_{D_s,\Pi}$ has several fractional vertices,
\eg the vector~$(0,1,\tfrac 12)^\top$, being the unique optimal
solution when minimizing~$(1,-1,\tfrac 12)^\top x$ over~$x\in C_{D_s,\Pi}$. Nevertheless, the separation problem for~$D_s$ can be
solved efficiently, as we will show in the following. Our approach is
thus well-suited to deal with minimum dwell time constraints as well.

In order to show tractability, we first argue that it is enough to consider as switching points the finitely many points in the set
\[
S:=[0,T]\cap \Big(\Z s+\big(\{0,T\}\cup\{a_i,b_i\colon
i=1,\dots,M\}\big)\Big)
\] where~$I_i=[a_i,b_i]$ for~$i=1,\dots,M$. The
set~$S$ thus contains all end points of the intervals~$I_1,\dots,I_M$
and $[0,T]$ shifted by arbitrary integer multiples of~$s$, as long as
they are included in~$[0,T]$. Clearly, we can compute~$S$
in~$O(M\sigma)$ time. Let~$\tau_1\dots,\tau_{|S|}$ be the elements
of~$S$ sorted in ascending order.
\begin{lemma}\label{lem:dwell}
	Let~$v$ be a vertex of~$C_{D_s,\Pi}$. Then there exists~$u\in D_s$ with~$\Pi(u)=v$ such that~$u$ switches only in~$S$.
\end{lemma}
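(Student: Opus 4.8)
The plan is to start from an arbitrary $u\in D_s$ with $\Pi(u)=v$, where $v$ is a vertex of $C_{D_s,\Pi}$, and to "round" the switching points $t_1,\dots,t_\sigma$ of $u$ to points of $S$ without changing the projection $\Pi(u)$ and without violating the dwell-time constraints. First I would invoke the decomposition from the proof of Theorem~\ref{thm: combswpoint}: fixing the initial value $b\in\{0,1\}$ and the assignment $\varphi$ of each $t_i$ to an interval $[s_{\varphi(i)-1},s_{\varphi(i)}]$ between consecutive breakpoints of the $I_j$'s, the map $(t_1,\dots,t_\sigma)\mapsto\Pi(u_{b;t_1,\dots,t_\sigma})$ is linear, and the dwell-time conditions $t_j-t_{j-1}\ge s$ together with the assignment constraints are linear as well. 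Hence the set of switching-point vectors compatible with a given $(b,\varphi)$ and yielding a prescribed projection $v$ is a (possibly empty) polytope $Q_{b,\varphi,v}\subseteq\R^\sigma$. Since $v\in C_{D_s,\Pi}$, at least one choice of $(b,\varphi)$ gives a nonempty $Q_{b,\varphi,v}$ containing a feasible vector for $u$; pick a \emph{vertex} $(t_1^\star,\dots,t_\sigma^\star)$ of that polytope and let $u^\star:=u_{b;t_1^\star,\dots,t_\sigma^\star}|_{[0,T]}$, which satisfies $u^\star\in D_s$ and $\Pi(u^\star)=v$.

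It remains to argue that the components $t_i^\star$ of a vertex of $Q_{b,\varphi,v}$ lie in $S$. A vertex of a polytope in $\R^\sigma$ is determined by $\sigma$ linearly independent active constraints. The active constraints available are: (a) assignment constraints $t_i=s_{\varphi(i)-1}$ or $t_i=s_{\varphi(i)}$, i.e.\ $t_i$ equals an endpoint of some $I_j$ or an endpoint of $[0,T]$; (b) dwell constraints $t_j-t_{j-1}=s$; and (c) the linear equations $\Pi(u_{b;t})=v$. The idea is a chaining argument: group the indices $\{1,\dots,\sigma\}$ into maximal blocks that are "rigidly linked" by active dwell equalities of type (b); within such a block all the $t_i$ differ by integer multiples of $s$, so it suffices to pin down \emph{one} member of each block to a point of $\Z s+(\{0,T\}\cup\{a_i,b_i\})$. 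Constraints of type (a) do exactly this. The remaining count of linearly independent constraints must come from type (c), i.e.\ from the equations $\int_{I_j}u_{b;t}\,\d t = \lambda(I_j)v_j$; here I would use that each such integral, restricted to the cell $P_\varphi$, is an affine function whose gradient in $t$ has entries that are $\pm 1$ or $0$ (each $t_i$ enters $\int_{I_j}\chi_{[t_{i-1},t_i]}$ with coefficient $\pm 1$ when $I_j$ straddles $t_i$, and $0$ otherwise), and, crucially, the right-hand sides $\lambda(I_j)v_j$ are themselves determined by the original $u$ as $\lambda(I_j)v_j=\int_{I_j}u$. Solving such an equation for the one "free" representative of a block in terms of the already-pinned quantities $a_i,b_i,T$ and integer multiples of $s$ again yields a value in $S$.

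The main obstacle I expect is making the block/chaining bookkeeping fully rigorous — in particular, verifying that whenever a block of indices is \emph{not} pinned down by a type-(a) constraint, the remaining independent active constraints necessarily include a type-(c) equation that determines it, and that this equation, being affine with $\pm 1$ coefficients and right-hand side in $\Z s+(\{0,T\}\cup\{a_i,b_i\})$ once the other blocks are substituted, forces the value into $S$. A clean way to organize this is by induction on the number of blocks: remove one block pinned by a type-(a) constraint (or, if none is pinned directly, use a type-(c) equation to express its free member through the others), substitute, and recurse on a lower-dimensional polytope of the same structure. A subtlety is that after substitution the right-hand sides stay of the required form — this is where the identity $\lambda(I_j)v_j=\int_{I_j}u$ and closure of $S$ under adding integer multiples of $s$ are used — and that the integer multiple accumulated by chaining through a block of length $\le\sigma$ keeps the shifted point inside $[0,T]$, which is guaranteed because $t_i^\star\in[0,T]$ for a feasible switching vector. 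Once this is in place, each $t_i^\star\in S$, so $u^\star$ switches only in $S$, completing the proof.
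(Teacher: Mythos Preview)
Your approach is structurally different from the paper's, which does not work inside the fiber polytope $Q_{b,\varphi,v}$ at all. Instead, the paper picks $c\in\R^M$ making $v$ the \emph{unique} minimizer of $c^\top(\cdot)$ over $C_{D_s,\Pi}$ and argues by perturbation: if some switching point $t\notin S$ lies inside an interval $I_i$, then $t$ together with all switching points linked to it by tight dwell constraints (which then also lie outside $S$, since $S$ is closed under shifts by $s$) can be moved simultaneously by $\pm\varepsilon$ without leaving feasibility and without any of them crossing an interval endpoint; this moves $c^\top\Pi(u)$ linearly, contradicting unique optimality of $v$. This forces every switching point not in $S$ to lie outside all $I_i$, and those can then be slid into $S$ without changing $\Pi(u)$. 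The vertex hypothesis is thus used in a direct and essential way.

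Your argument, by contrast, has a genuine gap precisely at the point where the vertex hypothesis on $v$ should enter. You claim that an active type~(c) constraint --- a projection equation $\Pi(u_{b;t})_j=v_j$ --- solved for the free representative of a block ``again yields a value in $S$''. But this equation has right-hand side $\lambda(I_j)v_j$, and nothing in your argument controls this number: the identity $\lambda(I_j)v_j=\int_{I_j}u$ that you invoke is just the definition of $v$ and carries no structural information. If $v$ were an arbitrary point of $C_{D_s,\Pi}$ rather than a vertex, your argument would read identically, yet the conclusion is plainly false (take one switch, one interval $I_1=[0,1]$, and $v_1=\tfrac13$; the unique preimage has $t_1=\tfrac13\notin S$). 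So the type~(c) step cannot work as written.

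Your framework can be repaired, but only by using the vertex property of $v$ in the right place: since $v$ is extreme in $C_{D_s,\Pi}$, it is also a vertex of every $\Pi(P_\varphi)$ containing it, hence the fiber $Q_{b,\varphi,v}=\Pi^{-1}(v)\cap P_\varphi$ is a \emph{face} of $P_\varphi$. Consequently any vertex of $Q_{b,\varphi,v}$ is already a vertex of $P_\varphi$ and is therefore pinned down by $\sigma$ active constraints of types (a) and (b) \emph{only}; your chaining argument then applies cleanly with no appeal to type~(c). Without this observation, the induction you sketch does not close.
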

\begin{proof}
	Choose~$c\in\R^M$ such that~$v$ is the unique minimizer of~$c^\top
	v$ with~$v\in C_{D_s,\Pi}$. Moreover, choose any~$u\in D_s$
	with~$\Pi(u)=v$ and let~$t_1,\dots,t_\sigma$ be the switching points of~$u$, i.e., let $0\le t_1\le\dots\le t_\sigma<\infty$ such that $u_{t_1,\dots,t_\sigma}\in [u]$. For the following, define
	$$S'_j:=\{t_\ell\mid
	\ell\in\{1,\dots,\sigma\},\ t_\ell-t_j=s(\ell-j)\}$$
	for~$j=1,\dots,\sigma$.
	
	Assume
	first that $t_j\in(a_i,b_i)\setminus S$ for
	some~$i\in\{1,\dots,M\}$ and some~$j\in\{1,\dots,\sigma\}$. By
	definition of~$S$, all switching points having minimal distance
	to~$t_j$ do not belong to~$S$ as
	well, i.e., $S'_j\cap S=\emptyset$. Hence all points in $S_j'$ can be shifted simultaneously by some
	small enough~$\varepsilon>0$, in both directions, maintaining
	feasibility with respect to~$D_s$ and without any of these points
	leaving or entering any of the intervals~$I_1,\dots,I_M$ and
	$[0,T]$. This shifting thus changes the value of~$c^\top\Pi(u)$
	linearly, as seen in the proof of~Theorem~\ref{thm: combswpoint}, which is
	a contradiction to unique optimality of~$v$.
	
	We have thus shown that any~$u\in D_s$ with~$\Pi(u)=v$ must have all
	switching points either in~$S$ or outside of any interval~$I_i$. So
	consider some~$u\in D_s$ with~$\Pi(u)=v$, defined by switching
	points~$t_1,\dots,t_\sigma$ as above, and let~$t_j\not\in
	S$ be any switching point of~$u$ not belonging to any
	interval~$I_i$. By shifting all switching points in~$S'_j$ simultaneously
	to the left until~$S_j'\cap S\neq\emptyset$, taking into account that the set~$S_j'$ may increase when~$t_j$ decreases, we obtain another
	function~$u'\in D_s$. By construction of~$S$, no shifting point is
	moved beyond the next point in~$S$ to the left of its original
	position. In particular, none of the shifting points being moved
	enters any of the intervals~$I_i$, so that we
	derive~$\Pi(u')=\Pi(u)=v$, but~$u'$ has strictly less switching
	points outside of~$S$ than~$u$. By repeatedly applying the same
	modification, we eventually obtain a function projecting to~$v$ with
	switching points only in~$S$.
\end{proof}
\begin{theorem}\label{thm:dwell}
	One can optimize over~$C_{D_s,\Pi}$ (and hence also separate
	from~$C_{D_s,\Pi}$) in time polynomial in~$M$ and~$\sigma$.
\end{theorem}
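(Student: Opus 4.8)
The plan is to reduce optimization over $C_{D_s,\Pi}$ to a shortest-path (or min-cost) computation on a finite graph built from the candidate switching points in $S$, invoking Lemma~\ref{lem:dwell} to justify that this discrete model loses nothing. First I would fix the linear objective $c^\top v$ to be minimized over $v\in C_{D_s,\Pi}$. Since $C_{D_s,\Pi}$ is a polytope by Theorem~\ref{thm: combswpoint}, the minimum is attained at a vertex $v$, and by Lemma~\ref{lem:dwell} there is a feasible control $u\in D_s$ with $\Pi(u)=v$ whose switching points all lie in the precomputed finite set $S=\{\tau_1<\dots<\tau_{|S|}\}$. Hence it suffices to minimize $c^\top\Pi(u)$ over all $u\in D_s$ switching only in $S$; conversely any such $u$ gives a point of $C_{D_s,\Pi}$, so the two optimal values coincide.

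Next I would encode such controls combinatorially. A feasible $u\in D_s$ switching only in $S$ is determined by an initial value $b\in\{0,1\}$ together with a subsequence $\tau_{i_1}<\tau_{i_2}<\dots<\tau_{i_\ell}$ of $S$ (with $\ell\le\sigma$) at which $u$ flips, subject to the dwell-time gaps $\tau_{i_{k+1}}-\tau_{i_k}\ge s$. The objective $c^\top\Pi(u)=\sum_{j=1}^M c_j\,\tfrac{1}{\lambda(I_j)}\int_{I_j}u\,\d t$ is additive over the maximal constant pieces of $u$: on each interval between consecutive switching points (and before the first / after the last) $u$ is a known constant $b$ or $1-b$, and the contribution of that piece to $c^\top\Pi(u)$ is a number that can be computed in $O(M)$ time from the endpoints of the piece and the parity. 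This makes the problem a shortest path in a layered directed graph whose nodes are pairs (point of $S$, current parity) — equivalently two copies of $S$, one for each value of $u$ — with an arc from $(\tau_a,b)$ to $(\tau_{a'},1-b)$ whenever $\tau_{a'}-\tau_a\ge s$, weighted by the cost of the constant piece of value $b$ on $[\tau_a,\tau_{a'}]$ restricted to the $I_j$'s, plus source arcs from a start node (choosing $b$ and the first switching point, accounting for the initial piece on $[0,\tau_{i_1}]$) and sink arcs absorbing the final piece on $[\tau_{i_\ell},T]$; one also allows the "no switching" path directly from source to sink with $u\equiv b$. A shortest path with at most $\sigma$ switching arcs can be found by dynamic programming over the number of switches used, in time polynomial in $|S|=O(M\sigma)$ and $\sigma$, hence polynomial in $M$ and $\sigma$.

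Finally, since we can optimize any linear functional over $C_{D_s,\Pi}$ in polynomial time and $C_{D_s,\Pi}$ is a rational polytope of dimension at most $M$ whose vertices and facet complexity are polynomially bounded (the vertices arise from controls switching in the explicitly given point set $S$), the polynomial-time equivalence of optimization and separation over well-described polyhedra — the Grötschel–Lovász–Schrijver theorem — yields a polynomial-time separation oracle for $C_{D_s,\Pi}$. I expect the main obstacle to be the careful bookkeeping in two places: first, verifying rigorously that Lemma~\ref{lem:dwell}'s controls, together with the "shift to the left" normalization, indeed cover every vertex so that the discrete optimum equals the continuous one (this is essentially already done in Lemma~\ref{lem:dwell}); and second, checking that the piecewise contributions to $c^\top\Pi(u)$ are genuinely additive and efficiently computable, including correct handling of switching points that coincide, that fall on interval endpoints, or that lie outside all $I_j$ — the latter contributing zero cost but still consuming dwell-time budget, which the graph must model by permitting "free" transitions through such points. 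Getting the bound of $\sigma$ on the number of switches threaded correctly through the dynamic program, and confirming the resulting polytope description is polynomially bounded so that GLS applies, are the remaining details.
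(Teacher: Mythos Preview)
Your proposal is correct and follows essentially the same route as the paper: reduce to controls switching only in the finite set $S$ via Lemma~\ref{lem:dwell}, solve the resulting discrete problem by dynamic programming over $S$, and invoke the optimization--separation equivalence. The only differences are cosmetic: the paper uses a slicker two-state recursion $c^*(\tau_j,b)$ that exploits $\tau_j-s\in S$ to avoid the $O(|S|^2)$ arc set, and it does not track the number of switches explicitly since the dwell-time constraint already forces at most $\sigma$ of them.
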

\begin{proof}
	By Lemma~\ref{lem:dwell}, it suffices to optimize over the
	projections of all~$u\in D_s$ with switchings only in~$S$. This
	can be done by a simple dynamic programming approach:
	given~$c\in\R^M$, we can compute the optimal value
	\[c^*(t,b):=\min\;c^\top \Pi(u\cdot\chi_{[0,t]})\; \text{
		s.t. }u\in D_s,\;\lim\limits_{\tau\searrow t}\hat{u}(\tau)=b\text{ if }t<T\] for~$b\in\{0,1\}$
	recursively for all~$t\in S$.
	Starting with~$c^*(\tau_1,b)=0$, we obtain
	\[
	c^*(\tau_j,b)=\min\begin{cases}
	\begin{array}{ll}
	c^*(\tau_{j-1},b)+c^\top\Pi(b\chi_{[\tau_{j-1},\tau_{j}]})\\
	c^*(\tau_{j}-s,1-b)+c^\top\Pi((1-b)\chi_{[\tau_{j}-s,\tau_{j}]}), & \text{if }\tau_j\ge s\\
	c^\top\Pi((1-b)\chi_{[0,\tau_j]}), & \text{if }\tau_j< s, b=1
	\end{array}
	\end{cases}
	\]
	for~$j=1,\dots,|S|$.  The desired optimal value
	is~$\min\{c^*(T,0),c^*(T,1)\}$ then, and a corresponding optimal
	solution can be derived easily.
\end{proof}
Note that~$\sigma$ is not polynomial in the input size in general, but
only pseudopolynomial, if~$T$ and~$s$ are considered part of the input.

In practice, it is necessary to design an explicit separation
algorithm for~$C_{D_s,\Pi}$ instead of using the theoretical
equivalence between separation and optimization. This might be
possible by generalizing the results presented in~\cite{lee04}. In
fact, in the special case that~$[0,T]$ is subdivided into
intervals~$I_1,\dots,I_M$ of the same size and this size is a divisor
of~$s$, it follows from Lemma~\ref{lem:dwell} that~$C_{D_s,\Pi}$ agrees
with the min-up/min-down polytope investigated in~\cite{lee04}. In
this case,~$C_{D_s,\Pi}$ is a 0/1-polytope and a full linear
description, together with an exact and efficient separation
algorithm, is given in~\cite{lee04}. It might be possible to obtain
similar polyhedral results for~$C_{D_s,\Pi}$ also in the general case.
We leave this as future work.

To conclude this section, we note that the latter results can easily be transferred to a situation where the minimum dwell time after switching up is different from the minimum dwell time after switching down, which is often considered in the literature. More generally, we may consider any~$\bar s\in\R_+^\sigma$ and define
\[
\begin{aligned}
D_{\bar s}:=\big\{ u\in BV(0,T)\colon &\exists \ t_1,\ldots,t_\sigma\geq 0 \\  &\text{ s.t.\ } t_1\ge \bar s_1,\;t_{j}-t_{j-1}\ge \bar s_j\; \forall \,j = 2, \dots,\sigma,\; u_{t_1,\dots,t_\sigma} \in [u] \big\}.
\end{aligned}
\]
In order to generalize the results obtained for~$D_s$, it suffices to replace the set~$S$ used above by the set
$$\bar S:=[0,T]\cap \Big(\{0\}\cup\big\{\pm\textstyle\sum_{j=\ell_1}^{\ell_2}\bar s_j\mid 1\le \ell_1\le \ell_2\le \sigma\big\}+\big(\{0,T\}\cup\{a_i,b_i\colon
i=1,\dots,M\}\big)\Big)\;,$$
which can be computed in~$O(M\sigma^2)$ time. Using~$\bar S$ in place of~$S$ and following the same reasoning, both Lemma~\ref{lem:dwell} and Theorem~\ref{thm:dwell} also hold for~$D_{\bar s}$.

\section{Numerical evaluation of bounds} \label{sec: qbounds}

In this section, we test the quality of our outer description of the convex hull
and, in particular, the strength of the resulting lower bounds. For this, we concentrate on the
case of a single switch with an upper bound~$\smax$ on the number of
switchings, \ie we consider
\[
D := \big\{ u \in BV(0,T)\colon \; u(t) \in \{0,1\} \text{ f.a.a.\ } t \in (0,T),\; |u|_{BV(0,T)} \leq \smax \big\}.
\]
However, we assume that $u$ is fixed to zero before the time horizon,
so that we count it as a shift if $u$ is $1$ at the beginning. Moreover, we consider exemplarily a square domain $\Omega=[0,1]^2$, the
end time $T=2$, the upper bound $\smax=2$ on the number of switchings
and the form function $\psi$ as well the desired state $y_\textup{d}$
given as
\[
\begin{aligned}
\psi(x)&:=12\pi^2 \exp(x_1+x_2)\sin(\pi\,x_1)\sin(\pi\,x_2) \\ 
y_\textup{d}(t,x)&:=2\pi^2\,\max(\cos(2\pi\, t),0)\sin(\pi\,x_1)\sin(\pi\,x_2). \\ 
\end{aligned}
\]
We always choose $\alpha=0$, so that the computed bounds are
not deteriorated by the Tikhonov term.

For the discretization of the optimal control problem, we use the \textsc{DUNE}-library~\cite{SAN21}. To obtain exact optimal solutions for comparison, we use the MINLP solver \textsc{Gurobi~9.1.2}~\cite{gurobi} for solving the
discretized problem. The source code is part of the implementation at~\url{https://github.com/agruetering/dune-MIOCP}.
The spatial discretization uses a standard Galerkin method with continuous and piecewise linear functionals. For the state~$y$ and the desired temperature $y_\textup{d}$ we also use continuous and piecewise linear functionals in time, while the temporal discretization for the controls chooses piecewise constant functionals.
The
BV-seminorm condition then simplifies to
\begin{equation}\label{eq:semidisc}
u_0+\sum_{i=1}^{N_t-1} |u_i-u_{i-1}|\leq\smax\;,
\end{equation}
where the term~$u_0$ is added in order to count a shift if $u_0=1$.
We linearize~\eqref{eq:semidisc} by introducing $N_t-1$ additional real variables $z_i$ expressing the absolute values $|u_i-u_{i-1}|$. More precisely, we require~$z_i\ge u_i-u_{i-1}$ and~$z_i\ge u_{i-1}-u_i$ and use the linear constraint $u_0+\sum_{i=1}^{N_t}z_i\le \smax$ instead of~\eqref{eq:semidisc}. The naive convex relaxation now replaces the binarity constraint~$u_i\in\{0,1\}$ with~$u_i\in[0,1]$
for~$i=0,\dots,N_t-1$. For the tailored convexification presented in this
paper, we instead omit the constraint~\eqref{eq:semidisc} and iteratively add
a most violated cutting plane for~$C_{D,\Pi}$, where the intervals $I_1,\ldots,I_M$ for the projection are the ones given by the discretization in time, until the
relative change of the bound is less than $0.1\%$ in three successive
iterations. To the best of our knowledge, there is no standard procedure for solving the convexified control problems with additional linear control constraints arising at this point, we thus also use \textsc{Gurobi~9.1.2}~\cite{gurobi} for this.

We investigate the bounds for a sequence of discretizations with
various numbers~$N_t$ of time intervals and uniform spatial
triangulations of~$\Omega$ with $N_x\times N_x$ nodes. \textsc{Gurobi}
is run with default settings except that the parallel mode is switched
off for better comparison and the dual simplex method is used due to
better performance. All computations have been performed on a 64bit Linux system with an Intel Xeon E5-2640 CPU @ 2.5 GHz and $32$ GB RAM.

\bgroup \def\arraystretch{1.5}
\begin{table}[htb]
	\centering
	\begin{scriptsize}
		\begin{tabular}{rrrrrrrrrrr}
			\hline 
			$N_x$ & $N_t$ & \multicolumn{2}{c}{MINLP} & \multicolumn{2}{c}{naive rel.} &  \multicolumn{4}{c}{tailored convexification} &\\
			\cmidrule(lr){3-4}\cmidrule(lr){5-6}\cmidrule(lr){7-11} &  & Obj & Time (s) & Obj & Gap & Obj & \#Cuts & \#Ex &  Gap & Filled gap \\
			\hline
			10& 20 & 13.69 &     4.38 & 8.41 & 38.60 \% & 9.67 &  21 & 5&29.39 \% & 23.85 \% \\
			& 40 & 12.76 &    39.51 & 7.39 & 42.09 \% & 9.03 &  56 & 16&29.22 \% & 30.59 \% \\
			& 60 & 12.51 &   152.44 & 7.29 & 41.71 \% & 8.86 & 108 & 30&29.19 \% & 30.01 \% \\
			& 80 & 12.50 &   465.19 & 7.28 & 41.75 \% & 8.53 & 143 & 67&31.74 \% & 23.98 \% \\
			& 100 & 12.54 &   663.20 & 7.25 & 42.18 \% & 7.95 & 136 & 88&36.62 \% & 13.18 \% \\
			\hline
			15& 20 & 13.69 &    32.07 & 8.38 & 38.79 \% & 9.67 &  21 & 5&29.40 \% & 24.20 \% \\
			& 40 & 12.76 &   272.97 & 7.38 & 42.13 \% & 9.08 &  52 & 16&28.83 \% & 31.57 \% \\
			& 60 & 12.51 &  1183.68 & 7.29 & 41.75 \% & 8.71 &  91 & 28&30.35 \% & 27.30 \% \\
			& 80 & 12.50 &  2837.12 & 7.28 & 41.78 \% & 8.35 & 117 & 60&33.19 \% & 20.57 \% \\
			& 100 & 12.54 &  4686.54 & 7.25 & 42.22 \% & 7.79 & 131 & 93&37.91 \% & 10.20 \% \\
			\hline
			20& 20 & 13.69 &   109.08 & 8.37 & 38.85 \% & 9.69 &  23 & 5&29.24 \% & 24.73 \% \\
			& 40 & 12.76 &  1305.88 & 7.38 & 42.14 \% & 8.96 &  59 & 20&29.75 \% & 29.40 \% \\
			& 60 & 12.51 &  5147.66 & 7.29 & 41.76 \% & 8.57 &  86 & 35&31.52 \% & 24.53 \% \\
			& 80 & 12.50 & 15185.22 & 7.28 & 41.78 \% & 8.30 & 123 & 62&33.62 \% & 19.53 \% \\
			& 100 & 12.54 & 19550.01 & 7.25 & 42.23 \% & 8.00 & 153 & 91 &36.20 \% & 14.27 \% \\
			\hline
		\end{tabular} 
	\end{scriptsize}
	\caption{Comparison of naive and tailored convexification.}\label{tab:gurobi}
\end{table}
\egroup

The results are presented in Table~\ref{tab:gurobi}. For given choices
of~$N_t$ and~$N_x$, we report the objective values (Obj) obtained by
the exact approach and the two relaxations. We emphasize that, for a
given optimal solution of the respective problem, we recalculate the
objective value with a much finer discretization, choosing $N_t=200$
and $N_x=100$. In particular, the bounds do not necessarily behave
monotonously. It can be seen from the results that the new bounds are clearly
stronger than the naive bounds. In the last column (Filled gap), we
state how much of the gap left open by the naive relaxation is closed
by the new relaxation. We also state how many cutting planes are
computed altogether (\#Cuts) and how many of them are needed to obtain
at least the same bound as the naive relaxation (\#Ex). The main
message of Table~\ref{tab:gurobi} is that our new approach yields better
bounds than the naive approach even after adding relatively few
cutting planes. Additionally, the naive relaxation includes inequality
constraints involving the BV-seminorm, such that its solution is very
challenging in practice.

For the exact approach, we also state the time (in seconds) needed
for the solution of the problem (Time). It is obvious from the results
that only very coarse discretizations can be considered when using a
straightforward MINLP-based approach. In the companion paper~\cite{partII},
we thus develop a tailored outer approximation algorithm based on the the convex hull description in \eqref{eq:convapprox} in order  to compute the dual bounds of our convex relaxation of~\eqref{eq:optprob} more efficiently.

\fontsize{9}{10.5}\selectfont
\bibliographystyle{siam}
\bibliography{reference}

\begin{thebibliography}{10}

\bibitem{BK20}
{\em Matching algorithms and complexity results for constrained mixed-integer
  optimal control with switching costs}, SIAM J. Optim.,  (2020).

\bibitem{ALT16}
{\sc H.~W. Alt}, {\em Linear functional analysis: an application-oriented
  introduction}, Springer, 2016.

\bibitem{ATT14}
{\sc H.~Attouch, G.~Buttazzo, and G.~Michaille}, {\em Variational Analysis in
  Sobolev and {BV} Spaces}, SIAM, 2014.

\bibitem{BKLL13}
{\sc P.~Belotti, C.~Kirches, S.~Leyffer, J.~Linderoth, J.~Luedtke, and
  A.~Mahajan}, {\em Mixed-integer nonlinear optimization}, Acta Numer., 22
  (2013), pp.~1--131.

\bibitem{BHKM20}
{\sc F.~Bestehorn, C.~Hansknecht, C.~Kirches, and P.~Manns}, {\em Mixed-integer
  optimal control problems with switching costs: a shortest path approach},
  Math. Program.,  (2020), pp.~1--32.

\bibitem{partII}
{\sc C.~Buchheim, A.~Gr\"utering, and C.~Meyer}, {\em Parabolic optimal control
  problems with combinatorial switching constraints -- {Part II}: Outer
  approximation algorithm}, arXiv preprint arXiv:2204.07008,  (2022).

\bibitem{BH23}
{\sc C.~Buchheim and M.~H{\"u}gging}, {\em The polytope of binary sequences
  with bounded variation}, Discrete Optim., 48 (2023), p.~100776.

\bibitem{CW20}
{\sc E.~Casas and D.~Wachsmuth}, {\em First and second order conditions for
  optimal control problems with an {$L^0$} term in the cost functional}, SIAM
  J. Control Optim., 58 (2020), pp.~3486--3507.

\bibitem{CWW18}
{\sc E.~Casas, D.~Wachsmuth, and G.~Wachsmuth}, {\em Second-order analysis and
  numerical approximation for bang-bang bilinear control problems}, SIAM J.
  Control Optim., 56 (2018), pp.~4203--4227.

\bibitem{CIK16}
{\sc C.~Clason, K.~Ito, and K.~Kunisch}, {\em A convex analysis approach to
  optimal controls with switching structure for partial differential
  equations}, ESAIM Control Optim. Calc. Var., 22 (2016), pp.~581--609.

\bibitem{CKK18}
{\sc C.~Clason, F.~Kruse, and K.~Kunisch}, {\em Total variation regularization
  of multi-material topology optimization}, ESAIM Math. Model. Numer. Anal., 52
  (2018), pp.~275--303.

\bibitem{CK14}
{\sc C.~Clason and K.~Kunisch}, {\em Multi-bang control of elliptic systems},
  Ann. Inst. H. Poincar\'{e} Anal. Non Lin\'{e}aire, 31 (2014), pp.~1109--1130.

\bibitem{CK16}
{\sc C.~Clason and K.~Kunisch}, {\em A convex analysis approach to
  multi-material topology optimization}, ESAIM Math. Model. Numer. Anal., 50
  (2016), pp.~1917--1936.

\bibitem{CRK17}
{\sc C.~Clason, A.~Rund, and K.~Kunisch}, {\em Nonconvex penalization of
  switching control of partial differential equations}, Systems Control Lett.,
  106 (2017), pp.~1--8.

\bibitem{CRKB16}
{\sc C.~Clason, A.~Rund, K.~Kunisch, and R.~C. Barnard}, {\em A convex penalty
  for switching control of partial differential equations}, Systems Control
  Lett., 89 (2016), pp.~66--73.

\bibitem{CTW18}
{\sc C.~Clason, C.~Tameling, and B.~Wirth}, {\em Vector-valued multibang
  control of differential equations}, SIAM J. Control Optim., 56 (2018),
  pp.~2295--2326.

\bibitem{DH12}
{\sc K.~Deckelnick and M.~Hinze}, {\em A note on the approximation of elliptic
  control problems with bang-bang controls}, Comput. Optim. Appl., 51 (2012),
  pp.~931--939.

\bibitem{EWA06}
{\sc M.~Egerstedt, Y.~Wardi, and H.~Axelsson}, {\em Transition-time
  optimization for switched-mode dynamical systems}, IEEE Trans. Autom.
  Control, 51 (2006), pp.~110--115.

\bibitem{FMO13}
{\sc K.~Fla{\ss}kamp, T.~Murphey, and S.~Ober-Bl{\"o}baum}, {\em Discretized
  switching time optimization problems}, in 2013 European Control Conference
  (ECC), IEEE, 2013, pp.~3179--3184.

\bibitem{FUEG09}
{\sc A.~F{\"u}genschuh, B.~Gei{\ss}ler, A.~Martin, and A.~Morsi}, {\em The
  transport pde and mixed-integer linear programming}, in Dagstuhl seminar
  proceedings, Schloss Dagstuhl-Leibniz-Zentrum f{\"u}r Informatik, 2009.

\bibitem{GPRS19}
{\sc D.~Garmatter, M.~Porcelli, F.~Rinaldi, and M.~Stoll}, {\em Improved
  penalty algorithm for mixed integer pde constrained optimization problems},
  Computers \& Mathematics with Applications, 116 (2022), pp.~2--14.

\bibitem{GER05}
{\sc M.~Gerdts}, {\em Solving mixed-integer optimal control problems by
  branch\&bound: a case study from automobile test-driving with gear shift},
  Optim. Control Appl. Methods, 26 (2005), pp.~1--18.

\bibitem{GER06}
{\sc M.~Gerdts}, {\em A variable time transformation method for mixed-integer
  optimal control problems}, Optim. Control Appl. Methods, 27 (2006),
  pp.~169--182.

\bibitem{GRIS85}
{\sc P.~Grisvard}, {\em Elliptic Problems in Nonsmooth Domains}, Classics in
  Applied Mathematics, SIAM, Philadelphia, 1985.

\bibitem{gurobi}
{\sc {Gurobi Optimization, LLC}}, {\em Gurobi optimizer reference manual}.
\newblock \url{https://www.gurobi.com}, 2021.

\bibitem{HKMS19}
{\sc M.~Hahn, C.~Kirches, P.~Manns, S.~Sager, and C.~Zeile}, {\em Decomposition
  and approximation for pde-constrained mixed-integer optimal control}, MH et
  al.(ed.) SPP1962 Special Issue. Birkh{\"a}user,  (2019).

\bibitem{HAN20}
{\sc F.~M. Hante}, {\em Mixed-integer optimal control for pdes: Relaxation via
  differential inclusions and applications to gas network optimization}, in
  Mathematical Modelling, Optimization, Analytic and Numerical Solutions,
  Springer, 2020, pp.~157--171.

\bibitem{HS13}
{\sc F.~M. Hante and S.~Sager}, {\em Relaxation methods for mixed-integer
  optimal control of partial differential equations}, Comput. Optim. Appl., 55
  (2013), pp.~197--225.

\bibitem{JM11}
{\sc E.~R. Johnson and T.~D. Murphey}, {\em Second-order switching time
  optimization for nonlinear time-varying dynamic systems}, IEEE Trans. Autom.
  Control, 56 (2011), pp.~1953--1957.

\bibitem{JUN14}
{\sc M.~Jung}, {\em Relaxations and approximations for mixed-integer optimal
  control}, PhD thesis, Ruprecht-Karls-Universität Heidelberg, 2014.

\bibitem{JRS15}
{\sc M.~N. Jung, G.~Reinelt, and S.~Sager}, {\em The {L}agrangian relaxation
  for the combinatorial integral approximation problem}, Optim. Methods
  Software, 30 (2015), pp.~54--80.

\bibitem{KLM20}
{\sc C.~Kirches, F.~Lenders, and P.~Manns}, {\em Approximation properties and
  tight bounds for constrained mixed-integer optimal control}, SIAM J. Control
  Optim., 58 (2020), pp.~1371--1402.

\bibitem{KSB10}
{\sc C.~Kirches, S.~Sager, H.~G. Bock, and J.~P. Schl{\"o}der}, {\em
  Time-optimal control of automobile test drives with gear shifts}, Optimal
  Control Applications and Methods, 31 (2010), pp.~137--153.

\bibitem{lee04}
{\sc J.~Lee, J.~Leung, and F.~Margot}, {\em Min-up/min-down polytopes},
  Discrete Optim., 1 (2004), pp.~77--85.

\bibitem{Lee12}
{\sc J.~Lee and S.~Leyffer}, eds., {\em Mixed Integer Nonlinear Programming},
  Springer-Verlag New York, 2012.

\bibitem{MAN19}
{\sc P.~Manns}, {\em Approximation properties of Sum-Up Rounding}, PhD thesis,
  Technische Universit{\"a}t Carolo-Wilhelmina zu Braunschweig, 2019.

\bibitem{KML17}
{\sc P.~Manns, C.~Kirches, and F.~Lenders}, {\em A linear bound on the
  integrality gap for sum-up rounding in the presence of vanishing
  constraints}, Preprint Optimization Online 6580.

\bibitem{ROL17}
{\sc M.~Ringkamp, S.~Ober-Bl{\"o}baum, and S.~Leyendecker}, {\em On the time
  transformation of mixed integer optimal control problems using a consistent
  fixed integer control function}, Math. Program., 161 (2017), pp.~551--581.

\bibitem{HR16}
{\sc F.~R{\"u}ffler and F.~M. Hante}, {\em Optimal switching for hybrid
  semilinear evolutions}, Nonlinear Anal. Hybrid Syst., 22 (2016),
  pp.~215--227.

\bibitem{SA05}
{\sc S.~Sager}, {\em Numerical methods for mixed-integer optimal control
  problems}, Der Andere Verlag T{\"o}nning, 2005.

\bibitem{SA12}
{\sc S.~Sager, H.~G. Bock, and M.~Diehl}, {\em The integer approximation error
  in mixed-integer optimal control}, Math. Program., 133 (2012), pp.~1--23.

\bibitem{KMS11}
{\sc S.~Sager, M.~Jung, and C.~Kirches}, {\em Combinatorial integral
  approximation}, Math. Methods Oper. Res., 73 (2011), pp.~363--380.

\bibitem{ZS20}
{\sc S.~Sager and C.~Zeile}, {\em On mixed-integer optimal control with
  constrained total variation of the integer control}, Comput. Optim. Appl., 78
  (2021), pp.~575--623.

\bibitem{SAN21}
{\sc O.~Sander}, {\em DUNE—The Distributed and Unified Numerics Environment},
  vol.~140, Springer Nature, 2021.

\bibitem{SBF13}
{\sc M.~Schori, T.~J. Boehme, B.~Frank, and M.~Schultalbers}, {\em Solution of
  a hybrid optimal control problem for a parallel hybrid vehicle}, IFAC
  Proceedings Volumes, 46 (2013), pp.~109--114.

\bibitem{SH20}
{\sc M.~Sharma, M.~Hahn, S.~Leyffer, L.~Ruthotto, and B.~van Bloemen~Waanders},
  {\em Inversion of convection--diffusion equation with discrete sources},
  Optimization and Engineering, 22 (2021), pp.~1419--1457.

\bibitem{SOG16}
{\sc B.~Stellato, S.~Ober-Bl{\"o}baum, and P.~J. Goulart}, {\em Optimal control
  of switching times in switched linear systems}, in 2016 IEEE 55th Conference
  on Decision and Control (CDC), IEEE, 2016, pp.~7228--7233.

\bibitem{SOG17}
{\sc B.~Stellato, S.~Ober-Bl{\"o}baum, and P.~J. Goulart}, {\em Second-order
  switching time optimization for switched dynamical systems}, IEEE Trans.
  Autom. Control, 62 (2017), pp.~5407--5414.

\bibitem{Troe79}
{\sc F.~Tr\"{o}ltzsch}, {\em A minimum principle and a generalized bang-bang
  principle for a distributed optimal control problem with constraints on
  control and state}, Z. Angew. Math. Mech., 59 (1979), pp.~737--739.

\bibitem{Troe10}
{\sc F.~Tr\"oltzsch}, {\em Optimal Control of Partial Differential Equations:
  Theory, Methods, and Applications}, vol.~112, American Mathematical Society,
  2010.

\bibitem{TW18}
{\sc F.~Tr\"{o}ltzsch and D.~Wachsmuth}, {\em On the switching behavior of
  sparse optimal controls for the one-dimensional heat equation}, Math. Control
  Relat. Fields, 8 (2018), pp.~135--153.

\bibitem{VSG20}
{\sc O.~von Stryk and M.~Glocker}, {\em Decomposition of mixed-integer optimal
  control problems using branch and bound and sparse direct collocation}, in
  ADPM 2000 -- The 4th International Conference on Automation of Mixed
  Processes: Hybrid Dynamic Systems, 2000, pp.~99--104.

\bibitem{Wac19}
{\sc D.~Wachsmuth}, {\em Iterative hard-thresholding applied to optimal control
  problems with {$L^0(\Omega)$} control cost}, SIAM J. Control Optim., 57
  (2019), pp.~854--879.

\end{thebibliography}
\end{document}